\documentclass{amsart}

\numberwithin{equation}{section}

\usepackage{hyperref}
\usepackage{amsmath}
\usepackage{amsthm}
\usepackage{amssymb}
\usepackage{enumerate}
\usepackage[alphabetic]{amsrefs}
\usepackage{tikz}
\usetikzlibrary{arrows,decorations.pathreplacing,decorations.markings,shapes.geometric,through,fit,shapes.symbols,positioning}


\newcommand\fR{\ensuremath{\mathfrak R}}


\newcommand\fg{\ensuremath{\mathfrak g}}
\newcommand\fh{\ensuremath{\mathfrak h}}

\newcommand\fl{\ensuremath{\mathfrak l}}

\newcommand\fs{\ensuremath{\mathfrak s}}


\newcommand\cC{\ensuremath{\mathcal C}}

\newcommand\cO{\ensuremath{\mathcal O}}
\newcommand\cP{\ensuremath{\mathcal P}}

\newcommand\cR{\ensuremath{\mathcal R}}
\newcommand\cS{\ensuremath{\mathcal S}}
\newcommand\cT{\ensuremath{\mathcal T}}
\newcommand\cU{\ensuremath{\mathcal U}}


\newcommand\bbC{\ensuremath{\mathbb C}}

\newcommand\bbQ{\ensuremath{\mathbb Q}}

\newcommand\bbZ{\ensuremath{\mathbb Z}}

\theoremstyle{plain}
\newtheorem{thm}{Theorem}[section]
\newtheorem{prop}[thm]{Proposition}
\newtheorem{lem}[thm]{Lemma}

\theoremstyle{definition}
\newtheorem{rem}[thm]{Remark}
\newtheorem{dfn}[thm]{Definition}
\newtheorem{conv}[thm]{Convention}

\DeclareMathOperator{\wt}{wt}
\DeclareMathOperator{\id}{id}
\DeclareMathOperator{\Gr}{Gr}
\DeclareMathOperator{\End}{End}
\DeclareMathOperator{\Hom}{Hom}
\DeclareMathOperator{\ran}{ran}

\newcommand{\ext}{\Lambda}
\renewcommand{\oint}{\cO}
\newcommand{\ointv}{\oint^{\nu{}}}
\newcommand{\ointq}{\oint^q}
\newcommand{\vect}{\mathrm{Vect}}
\newcommand{\ointone}{\oint^1}
\newcommand{\ointqf}{\cO_{\mathrm{fin}}^q}
\newcommand{\ointvf}{\cO_{\mathrm{fin}}^{\nu}}
\newcommand{\fbg}{\mathfrak{B}_{\mathfrak{g}}}
\newcommand{\uq}{U_q}

\newcommand{\uqg}{\uq(\fg)}

\newcommand{\uvz}{U^{\bbZ}_{\nu}(\fg)}
\newcommand{\uqqg}{U_q^{\bbQ}(\fg)}
\newcommand{\uvq}{U_{\nu}^{\bbQ}(\fg)}
\newcommand{\uoneg}{U_1(\fg)}

\newcommand{\qvm}{\bbQ(\nu^{\frac1m})}

\newcommand{\Qq}{\bbQ(q)}
\newcommand{\Qv}{\bbQ(\nu)}

\newcommand{\zv}{\bbZ[\nu,\nu^{-1}]}
\newcommand{\vlc}{V_\lambda^\bbC}
\newcommand{\braid}{\cR}

\renewcommand{\csc}{\cS \cC}
\newcommand{\e}{\mathrm{ev}}
\renewcommand{\o}{\mathrm{odd}}
\renewcommand{\l}{\mathrm{low}}

\title{Remarks on quantum symmetric algebras}
\author{Alexandru Chirvasitu}
\address{Department of Mathematics \\
University of California \\
Berkeley, CA 94720-3840}
\email{chirvasitua@math.berkeley.edu}

\author{Matthew Tucker-Simmons}
\address{Department of Mathematics \\
University of California \\
Berkeley, CA 94720-3840}
\email{mbtucker@math.berkeley.edu}

\thanks{This research was conducted while the first and second authors were  supported in part by National Science Foundation grants DMS-0901554 and DMS-1066368, respectively.}

\begin{document}

\begin{abstract}
  We examine the quantum symmetric and exterior algebras of finite-dimensional $\uqg$-modules first systematically studied in \cite{BerZwi08} and resolve some of the questions raised therein.
  We show that the difference (in the Grothendieck group) between the quantum symmetric and exterior cubes of a finite-dimensional module is the same as it is classically.
  Furthermore, we show that quantum symmetric algebras are commutative in an appropriate sense.
  We make extensive use of the coboundary structure on the module category.
\end{abstract}

\maketitle

\section{Introduction}
\label{sec:intro}

An important problem in the representation theory of a complex semisimple Lie algebra $\fg$ is to understand the decomposition into simple submodules of the symmetric and exterior powers of a finite-dimensional $\fg$-module.
This motivated Berenstein and Zwicknagl \cite{BerZwi08} to introduce quantum analogues $S_q(V)$ and $\ext_q(V)$ of the classical symmetric and exterior algebras for a finite-dimensional $\uqg$-module $V$.
These are quotients of the tensor algebra of $V$ by relations obtained from the braiding of $V$ with itself.
The relations are homogeneous, and hence both $S_q(V)$ and $\ext_q(V)$ are naturally graded.

The module $V$ is defined to be \emph{flat} \cite{BerZwi08}*{Definition 2.27} if the graded components of $S_q(V)$ (or equivalently of $\ext_q(V)$) have the same dimensions as they do classically.
This is not automatic.
For example, Rossi-Doria \cite{Ros99} showed that the four-dimensional simple $U_q(\fs \fl_2)$-module is not flat.
In fact, Zwicknagl has shown that for any semisimple $\fg$ there are only finitely many flat simple $\uqg$-modules \cite{Zwi}*{Theorems 3.12, 4.23}.
The latter result asserts that $V$ is flat if and only if $S(\overline{V})$, the symmetric algebra of the classical limit of $V$, is Poisson with respect to a bracket coming from a classical $r$-matrix for $\fg$.
Heckenberger and Kolb independently proved that these modules are flat \cite{HecKol04}*{Corollary 6.2}, and applied this to study the associated quantum exterior algebras in \cite{HecKol06}*{Propositions 3.6, 3.7}.
The results of \cite{HecKol04,HecKol06} hold for all non-roots of unity $q \in \bbC^\times$.
By contrast, $q$ is a formal variable in \cite{BerZwi08,Zwi}, which means that the arguments therein will only specialize to transcendental deformation parameters.

In general, $S_q(V)$ is a deformation of a Poisson quotient of $S(\overline{V})$ \cite{BerZwi08}*{Theorem 2.21}.
Thus we may hope that understanding quantum symmetric algebras will lead to some insight into deformations of Poisson structures.

The structure of this paper is as follows.

In Section \ref{sec:notation} we set notation and recall various notions about quantized universal enveloping algebras and their module categories.

In Section \ref{sec:embedding} we show that the homogeneous components of $S_q(V)$ and $\ext_q(V)$ can be embedded naturally back into the tensor algebra, which answers a question of Berenstein and Zwicknagl; see discussion after \cite{BerZwi08}*{Problem 2.12}.

Section \ref{sec:coboundary} contains the first main result of the paper.
Since $S_q(V)$ is an analogue of the symmetric algebra $S(V)$, it is natural to ask whether $S_q(V)$ can be obtained in some sense as an ``enveloping quantum commutative algebra'' of the module $V$.
Theorem \ref{thm:enveloping_comm_alg} shows that this is indeed the case, regardless of whether or not $V$ is flat.
In preparation for this, we begin Section 4 with an extended discussion on continuity and limits of $\uqg$-actions as $q \to 1$, using techniques similar to those in \cite{Wen98}*{\S 2.1}.

In Section \ref{sec:groth} we prove a refined version of \cite{BerZwi08}*{Conjecture 2.26}.
For a finite-dimensional vector space $V$, a simple dimension count shows that
\[ \dim S^3 V - \dim \ext^3 V = (\dim V)^2.   \]
Suppose now that $V$ is a finite-dimensional $\uqg$-module.
Replacing $S^3 V$ and $\ext^3 V$ with $S^3_q V$ and $\ext^3_q V$, respectively, the analogous identity holds when $V$ is flat.
Roughly speaking, Conjecture 2.26 states that the same equality holds for any simple $\uqg$-module, regardless of flatness.
This means that the quantum symmetric and exterior cubes exhibit the same amount of collapsing in the non-flat case.
Theorem \ref{thm:symext} implies the conjecture by analyzing the element $S^3_q V - \ext^3_q V$ in the Grothendieck group of finite-dimensional $\uqg$-modules.

\subsection*{Acknowledgement} 

We would like to thank Sebastian Zwicknagl for the interest shown in this work, and for helpful discussions and suggestions.

\section{Notation and preliminaries}
\label{sec:notation}


\subsection{The Lie algebra}
\label{sec:liealg}

Let $\fg$ be a finite-dimensional complex semisimple Lie algebra with fixed Cartan subalgebra $\fh$ and corresponding root system $\Delta \subseteq \fh^*$.
We fix a choice of positive roots $\Delta^+$  which determines simple roots $\{\alpha_1, \dots, \alpha_r \}$.
The Killing form on $\fg$ determines a nondegenerate symmetric bilinear form $(\cdot \mid \cdot)$ on $\fh^*$, normalized so that $(\alpha \mid \alpha) = 2$ for short roots $\alpha$.
If we denote $d_i = \frac{(\alpha_i \mid \alpha_i)}{2}$, then the Cartan matrix is $(a_{ij})$, determined by $a_{ij} = \frac{(\alpha_i \mid \alpha_j)}{d_i}$.
The fundamental weights are $\{\omega_i, \dots \omega_r  \}$, determined by $(\omega_i \mid \alpha_j ) = \delta_{ij} d_j$.
We denote the weight lattice by $\cP = \bbZ \langle \omega_1, \dots, \omega_r \rangle$, and the set of dominant weights by $\cP^+=\{\lambda\in\cP\ :\ (\lambda\mid\alpha_i)\ge 0,\ \forall i\}$ (this is nothing but the set of non-negative linear combinations of $\omega_i$'s).

Let $m_{ij}=2$, $3$, $4$ or $6$ according to whether $a_{ij}a_{ji}=0$, $1$, $2$ or $3$, respectively. 
Recall that the \emph{braid group} $\fbg$ of $\fg$ is defined by generators $s_i$ for $1 \le i \le r$ and relations \[ s_is_js_i\ldots = s_js_is_j\ldots,\ i\ne j \] with $m_{ij}$ symbols on each side of the `$=$' sign; see \cite{KliSch97}*{\S 6.2.1}.


\subsection{The quantized universal enveloping algebra}
\label{sec:uqg}

Let $q > 0$ be a deformation parameter.
We focus on the positive real case because we need to use the compact real form of $\uqg$.

Denote $q_i = q^{d_i}$. 
The quantum number $[N]_q$ associated to a positive integer $N$ is defined to be $\displaystyle \frac{q^N-q^{-N}}{q-q^{-1}}$, and $[N]_q!$ denotes the quantum factorial $[1]_q [2]_q \dots [N]_q$. 
We may sometimes write $[N]_i$ and $[N]_i!$ for $[N]_{q_i}$ and $[N]_{q_i}!$ respectively.  

The quantized universal enveloping algebra $\uqg$ is the complex, associative, unital algebra generated by $E_i$, $F_i$, and $K_i^{\pm 1}$ for $1 \le i \le r$, with relations ensuring that the $K_i$'s are invertible and commute with one another, together with 
\begin{equation}
  K_i E_j K_i^{-1} = q_i^{a_{ij}} E_j, \label{eq:uqg-rel-ek}
\end{equation}
\begin{equation}
  \label{eq:uqg-rel-fk}
  K_i F_j K_i^{-1} = q_i^{- a_{ij}} F_j,   
\end{equation}
\begin{equation}
  \label{eq:uqg-rel-ef}
  E_i F_j - F_j E_i = \delta_{ij} \frac{K_i - K_i^{-1}}{q_i - q_i^{-1}},   
\end{equation}
and the quantum Serre relations, namely
\begin{equation}
 \sum_{N=0}^{1-a_{ij}} (-1)^N E_i^{(1-a_{ij}-N)}E_jE_i^{(N)} = 0 = \sum_{N=0}^{1-a_{ij}} (-1)^N F_i^{(1-a_{ij}-N)}F_jF_i^{(N)}\label{eq:uqg-rel-serre}
\end{equation}
where $\displaystyle E_i^{(N)}=\frac{E_i^N}{[N]_i!}$ is the $N$th divided power of $E_i$, and $F_i^{(N)}$ is defined analogously. 

Moreover, $\uqg$ is a Hopf algebra when endowed with the comultiplication $\Delta$ and counit $\varepsilon$ defined by
\begin{align*}
  \Delta(E_i) & = E_i \otimes 1+K_i\otimes E_i, \\
  \Delta(F_i) & = F_i \otimes K_i^{-1}+1\otimes F_i,\\
  \Delta(K_i) & = K_i \otimes K_i
\end{align*}
and 
\[   \varepsilon(E_i)=\varepsilon(F_i)=0,\quad \varepsilon(K_i)=1  \]
respectively, and this uniquely determines the antipode.

There is an action of the braid group $\fbg$ on $\uqg$, which we denote by $s_i \mapsto \cT_i$.
Let $w_0$ be the longest word in the Weyl group of $\fg$, and fix a minimal decomposition $ w_0 = w_{i_1} \dots w_{i_n} $ of $w_0$ into a product of simple reflections in the Weyl group.
Here $w_i$ is the reflection corresponding to the simple root $\alpha_i$.
The positive roots of $\fg$ are exhausted by the sequence
\[ \beta_k = w_{i_1} \dots w_{i_{k-1}}(\alpha_k),   \]
and then the \emph{quantum root vectors} in $\uqg$ are defined by
\[  E_{\beta_k} = \cT_{i_1} \dots \cT_{i_{k-1}}(E_{i_k}), \quad  F_{\beta_k} = \cT_{i_1} \dots \cT_{i_{k-1}}(F_{i_k}).  \]
We refer to \cite{KliSch97}*{\S 6.2} for further details.
The \emph{divided powers} of the quantum root vectors are defined to be 
\[ E_\beta^{(N)}=\frac{E_\beta^N}{[N]_{q_\beta}!},\quad F_\beta^{(N)}=\frac{F_\beta^N}{[N]_{q_\beta}!}, \]
where $q_\beta = q^{\frac{(\beta\mid\beta)}2}$.  

When the need for uniform notation arises, we will write $\uoneg$ for the usual enveloping algebra of $\fg$, with generators denoted by $E_i$, $F_i$, $H_i$, $ 1 \le i \le r$.  
This is a Hopf algebra with all elements of $\fg$ primitive.

We give $\uqg$ the $*$-structure called the \emph{compact real form}, determined by
\[  E_i^* = K_iF_i   , \quad F_i^* = E_i K_i^{-1}, \quad K_i^* = K_i,  \] 
with corresponding $*$-structure 
\[ E_i^*=F_i, \quad F_i^*=E_i, \quad H_i^*=H_i\] on $\uoneg$.

We will also need a formal version of $\uqg$.
Let $\nu$ be a formal variable, and define $\nu_i = \nu^{d_i}$.
Define $\uvq$ to be the $\Qv$-algebra with generators $E_i,F_i$, and $K_i^{\pm 1}$ with relations obtained by replacing $q$ with $\nu$ in \eqref{eq:uqg-rel-ek}, \eqref{eq:uqg-rel-fk}, \eqref{eq:uqg-rel-ef}, and \eqref{eq:uqg-rel-serre}; quantum numbers and factorials (and hence divided powers) are defined identically for $\nu$ as for $q$.
The comultiplication and counit of $\uvq$ are defined by the same formulas as for $\uqg$.
The braid group $\fbg$ acts on $\uvq$, and the quantum root vectors and their divided powers are defined as in $\uqg$.
Finally, $\uvz$ denotes the $\zv$-subalgebra of $\uvq$ generated by the divided powers $E_i^{(N)},F_i^{(N)}$ and the elements $K_i^{\pm 1}$.


\subsection{Representations of $\uqg$}
\label{sec:uqgreps}

For each $\lambda \in \cP^+$ we denote by $V_\lambda$ the finite-dimensional irreducible Type 1 representation of $\uqg$ with highest weight $\lambda$, and by $v_\lambda$ a fixed highest weight vector.  
This means that
$E_i v_\lambda = 0$ for all $i$, $V_\lambda = \uqg v_\lambda$, and for $1 \le i \le r$ we have $K_i v_\lambda = q^{(\alpha_i \mid \lambda)}v_\lambda$. 
The same notation will be used for simple $\fg$-modules and their highest weight vectors. 
Consult \cite{ChaPre95}*{\S 10.1} for more details.

We let $\ointq$ denote the category of integrable Type 1 representations of $\uqg$, with morphisms given by all module maps. 
More specifically this is the category of modules generated by the $V_\lambda$'s and closed under arbitrary direct sums. 
Similarly, the category of integrable $\fg$-modules will be denoted by $\ointone$. 
The full subcategory of $\ointq$ consisting of finite-dimensional objects will be denoted by $\ointqf$. 
Here, and throughout the rest of this subsection, $q$ is understood to be greater than 0. 
This allows us to cover both the quantum and the classical cases. 

The categories $\ointq$ and $\ointqf$ have monoidal structures given by the usual tensor product of $\bbC$-vector spaces. 
In both categories, the unit object is the base field $\bbC \cong V_0$, which is a $\uqg$-module via the counit.

The preceding discussion on $\ointq$ carries over almost verbatim to $\uvq$-modules: The category $\ointv$ of integrable $\uvq$-modules consists of direct sums of Type 1 finite-dimensional simple modules.
We use the same symbols $V_\lambda$ as before, as it will be clear from the context whether we are discussing $\uqg$ or $\uvq$-modules. 
Both $\ointv$ and its full subcategory $\ointvf$ consisting of finite-dimensional modules are monoidal with respect to the usual tensor product of $\Qv$-vector spaces.  

Up to a positive scalar multiple, there is a unique inner product on $V_\lambda\in\ointq$, conjugate-linear in the first variable and invariant under the action of $\uqg$ in the sense that
\[ (av, w  ) = (v, a^*w) \]
for all $a \in \uqg$ and $v,w \in V_\lambda$.  We fix the scaling so that $(v_\lambda,v_\lambda)=1$.

For an arbitrary representation $V$, fixing a decomposition 
\[   V \cong \oplus_j V_{\lambda_j} \]
and inner products on the individual summands as explained above determines an inner product on $V$ by making those summands mutually orthogonal. 

On tensor products of representations (each with a decomposition into irreducibles) we take the tensor product of the inner products defined above.

\subsection{Braidings}
\label{sec:braidings}

The monoidal category $\ointq$ is braided.  
The braiding can be realized in many ways, but there is a relatively standard choice, given by
\[ \braid_{V,W} =  \tau \circ R : V \otimes W \to W \otimes V,  \]
where $\tau$ is the tensor flip and $R$ is the $R$-matrix, which lives in an appropriate completion of $\uqg \otimes \uqg$.
We do not give the construction of $R$ explicitly here, but see the proof of Lemma \ref{lem:matrix-coeffs-are-nice} for some details.
The important point is that $R$ acts in tensor products of finite-dimensional representations; see \cite{KliSch97}*{\S 8.3.3}, and also \cite{KamTin09} for a nice description of the completion.

The properties that we need here are
\begin{equation*}
  \begin{gathered}
    R \Delta R^{-1} = \Delta^{op}, \\
    R (v \otimes w) = q^{  (\wt (v) \mid \wt (w) ) } v \otimes w,
  \end{gathered}
\end{equation*}
where $\Delta^{op} = \tau \circ \Delta$ is the opposite comultiplication, and $v,w$ are highest weight vectors.  
From these properties it follows that $R^* = R_{21}$, where $(a \otimes b)^* = a^* \otimes b^*$ and the $*$-structure extends to the completion by continuity.  This implies that
\begin{equation*}
  \braid_{V,W}^* = \braid_{W,V} : W \otimes V \to V \otimes W,
\end{equation*}
where $\braid_{V,W}^*$ is the Hilbert space adjoint of $\braid_{V,W}$ with respect to the inner product defined above in \S \ref{sec:uqgreps}.

This all carries over to $\uvq$, and we use the same notation for analogous objects: $R$ for the $R$-matrix, $\braid_{V,W}$ for the resulting braiding, and so on.


\subsection{Coboundary categories}
\label{sec:cbdry_intro}

In this section we recall the definition of a coboundary category \cite{Dri89}*{\S 3} and discuss the cactus group, which is the coboundary-category analogue of the braid group for a braided monoidal category.

A \emph{coboundary category} is a monoidal category $(\cC, \otimes, 1_\cC)$ with a natural isomorphism $\gamma$ from $\otimes$ to $\otimes^{op}$, i.e.\ for each pair of objects $X,Y$ of $\cC$ an isomorphism 
\[  \gamma_{X,Y} : X \otimes Y \to Y \otimes X  \]
satisfying the relations 
\begin{equation}
  \label{eq:coboundary_rels}
  \gamma_{Y,X} \circ \gamma_{X,Y} = \id, \quad     (\gamma_{Y,Z} \otimes \id_X) \circ \gamma_{X,Y \otimes Z} = (\id_Z \otimes \gamma_{X,Y}) \circ \gamma_{X \otimes Y, Z},
\end{equation}
for all objects $X,Y,Z$ of $\cC$.

In \cite{HenKam06} the authors define, for any objects $A_1, \dots, A_n$ in $\cC$ and  $1 \le p \le r < t \le n$ an isomorphism 
\begin{multline}
  \label{eq:sigma_prq}
  \sigma_{p,r,t} = \id \otimes \gamma_{(A_p \dots A_r),(A_{r+1} \dots A_t)} \otimes \id     : \\ 
  A_1 \dots A_{p-1}(A_p \dots A_r)(A_{r+1} \dots A_t)A_{t+1} \dots A_n \\ 
  \to A_1 \dots A_{p-1}(A_{r+1} \dots A_t)(A_p \dots A_r)A_{t+1} \dots A_n,
\end{multline}
where we have omitted the tensor symbols for readability.

For fixed $p<t$, the relations (\ref{eq:coboundary_rels}) imply that the various ways of composing the maps $\sigma_{p',r,t'}$ for $p \le p' \le r < t' \le t$ to get an isomorphism \[A_1 \dots A_{p-1}(A_p \dots A_t)A_{t+1} \dots A_n \cong A_1 \dots A_{p-1}(A_t \dots A_p)A_{t+1} \dots A_n\] all give the same map, which they call $s_{p,t}$. Alternatively, the $s_{p,t}$ can be defined recursively by
\begin{equation}
  s_{p,p+1} = \sigma_{p,p,p+1}, \quad s_{p,t} = \sigma_{p,p,t} \circ s_{p+1,t} \text{ for $t-p>1$.}  \label{eq:s_pq}
\end{equation}

This motivates the following:

\begin{dfn}
  \label{def:cactus_group}
  The \emph{$n$-fruit cactus group} $J_n$ is the abstract group generated by elements $s_{p,t}$ for $1 \le p < t \le n$ with relations
  \begin{enumerate}[(a)]
  \item $s_{p,t}^2 = 1$;
  \item $s_{p,t}s_{k,l} = s_{k,l}s_{p,t}$ if $p<t$ and $k<l$ are disjoint, i.e. if $t < k$ or $l < p$;
  \item $s_{p,t}s_{k,l} = s_{i,j}s_{p,t}$ if $p \le k < l \le t$, where $i,j$ are determined by $k+j = l+i = p+t$.
  \end{enumerate}
\end{dfn}

For any object $V$ of $\cC$, Lemmas 3 and 4 of \cite{HenKam06} state that the isomorphisms $s_{p,t}$ introduced in (\ref{eq:s_pq}) satisfy these relations, so $J_n$ acts on $V^{\otimes n}$.

There is a homomorphism $J_n \to S_n$ given by 
\begin{equation}
  \label{eq:cactus_quotient}
  s_{p,t} \mapsto \hat{s}_{p,t} =
  \begin{pmatrix}
    1 & \dots & p-1 & p & \dots & t & t+1 & \dots & n \\
    1 & \dots & p -1 & t & \dots & p & t+1 & \dots & n
  \end{pmatrix},
\end{equation}
i.e.\ the involutive permutation which reverses the interval from $p$ to $t$.


\subsection{A coboundary structure on $\ointq$}
\label{sec:cactus}

Taking polar decompositions of the braidings we get maps $\sigma_{V,W}$ which give $\ointq$ the structure of a coboundary category, as shown for instance in \cites{BerZwi08,KamTin09}.
More precisely, the $R$-matrix has the polar decomposition
\[ R = \bar{R}(R^*R)^{\frac12} = \bar{R}(R_{21}R)^{\frac12},   \]
and we then define the \emph{coboundary operators} by 
\begin{equation*}
  \sigma_{V,W} = \tau \circ \bar{R}_{V,W} : V \otimes W \to W \otimes V,
\end{equation*}
where $\bar{R}_{V,W}$ is the action of $\bar{R}$ in $V \otimes W$ and $\tau(v \otimes w) = w \otimes v$.
These operators are unitary by construction and satisfy the coboundary relations \eqref{eq:coboundary_rels}.
For $V = W$ we have that $\sigma_{V,V}$ is a self-adjoint unitary operator on $V \otimes V$, and more generally $\sigma_{V,W}^* = \sigma_{W,V}$.


\subsection{Braidings on super-representations}
\label{sec:super-reps}

In order to treat quantum symmetric and exterior algebras on the same footing, we recall the notion of super-representations.
In particular, we want to show how to extend the braiding (and coboundary structure) on $\ointq$ to the corresponding category of super-representations.

We can do this in a more general setting.
Let $\cC$ be a category satisfying the following hypotheses:
\begin{itemize}
\item $\cC$ is preadditive, i.e.\ $\Hom$-sets are abelian groups and composition of morphisms is bilinear; \cite{Mac98}*{\S I.8}.
\item $\cC$ has finite coproducts;
\item $\cC$ is monoidal, and the coproduct distributes over the tensor product;
\item $\cC$ is equipped with a braiding.
\end{itemize}
By preadditivity, finite products coincide with coproducts, so we simply refer to them as direct sums, and denote them by $\oplus$. 

Given such a category $\cC$, we form the category $\csc = \cC \times \cC$ whose objects are pairs $V = (V_0,V_1)$ of objects of $\cC$ and whose morphisms are pairs $f = (f_0,f_1)$ of morphisms of $\cC$. 
In other words, the objects of $\csc$ are nothing but the $\bbZ/2$-graded, or super-objects of $\cC$, and we will refer to them as such, often identifying $(V_0,V_1)\in\csc$ with the object $V_0\oplus V_1\in \cC$, together with the information of its decomposition into an even part $V_0$ and odd part $V_1$.

The tensor product $V\otimes W$ of two objects $V=(V_0,V_1)$ and $W=(W_0,W_1)$ of $\csc$ is defined by
\[  (V \otimes W)_0 =  (V_0\otimes W_0)\oplus (V_1\otimes W_1), \quad (V \otimes W)_1 = (V_0\otimes W_1)\oplus (V_1 \otimes W_0).  \]
The braiding on $\csc$ is determined by its restrictions to the summands of each $V \otimes W$.   
For $i,j\in\{0,1\}$, the braiding on the summand $V_i\otimes W_j$ of $V\otimes W$ defined above is defined to be the braiding inherited from $\cC$, twisted by the sign $(-1)^{ij}$. 
It is straightforward to check that this does indeed make $\csc$ into a braided monoidal category.
We also note that this discussion carries over verbatim to coboundary structures, so any coboundary monoidal category $\cC$ gives rise to a coboundary monoidal category $\csc$. 
As it will be clear from the context whether we are placing ourselves inside $\cC$ or $\csc$, we will typically denote the braidings (resp. coboundary structures) on $\cC$ and $\csc$ by the same symbol.   

Finally, one last piece of notation: every object $V\in\cC$ has both an even and an odd incarnation in $\csc$, namely $(V,0)$ and $(0,V)$, were $0$ is the zero object of $\cC$.
We denote these by $V_\e$ and $V_\o$, respectively.
In this notation, the monoidal unit for $\csc$ is $1_\e$, where $1$ is the monoidal unit for $\cC$.

The relevance of all of this lies in the observation that if one endows the category $\vect$ of vector spaces over some field with its usual symmetric monoidal structure, then for any $V\in\vect$, the exterior algebra $\Lambda(V)$ can be recovered as the universal commutative algebra in $\cS\vect$ generated by the odd copy $V_\o\in\cS\vect$ of $V$. 
So in a sense, exterior algebras are nothing but symmetric algebras in a different monoidal category. 
We will make all of this more precise for the categories $\ointq$ and $\ointv$ below, after we recall the quantum versions of the symmetric and exterior algebra constructions.


\section{Embedding quantum symmetric and exterior algebras into the tensor algebra}
\label{sec:embedding}

In this section we recall the definitions of quantum symmetric and exterior algebras, and we address Question 2.12 of \cite{BerZwi08}.


\subsection{Quantum symmetric and exterior algebras}
\label{sec:qalgs}

We consider a fixed module $V$ in $\ointq$ and denote $\sigma = \sigma_{V,V}$.   
The spaces of \emph{symmetric} and \emph{antisymmetric} vectors in $V^{\otimes n}$ for $n \ge 2$ are defined by
\begin{equation}
  \label{eq:symvectors}
  \begin{gathered}
  S^n_q V = \{ v \in V^{\otimes n} \mid \sigma_i v = v \text{ for } 1 \le i \le n-1  \},\\
  \ext^n_q V = \{ v \in V^{\otimes n} \mid \sigma_i v = -v \text{ for } 1 \le i \le n-1  \},
  \end{gathered}
\end{equation}
respectively, where $\sigma_i$ is $\sigma$ acting in the $i$ and $i+1$ tensor factors of $V^{\otimes n}$, and the identity in all others.  
The \emph{quantum symmetric algebra} and \emph{quantum exterior algebra} are defined as
\[  S_q(V) = T(V)/\langle \ext^2_q V  \rangle, \quad \ext_q(V) = T(V)/ \langle S^2_q V  \rangle,    \]
respectively.  
As the defining ideals are homogeneous, the algebras are graded, and we denote their graded components by $S^n_q(V)$ and $\ext^n_q(V)$, respectively.

\begin{rem}
  We emphasize that $S^n_q V$, with no parentheses, is a submodule of $V^{\otimes n}$, while $S^n_q(V)$ is a quotient of $V^{\otimes n}$.
  The constructions $V \mapsto S^n_q V$ and $V \mapsto S^n_q(V)$ are functorial in $V$, and similarly for $\ext^n_q$.

  We note also that \cite{BerZwi08} uses the notation $S^n_\sigma V$, $\ext^n_\sigma V$, rather than $S^n_q V$, $\ext^n_q V$, etc.  
  We emphasize the dependence on the parameter $q$ because later we will need to consider what happens as $q$ varies.
\end{rem}

We mentioned above that the point of \S \ref{sec:super-reps} is to allow us to treat quantum exterior algebras as quantum symmetric algebras in a different category.
This works as follows: 

The construction of the quantum symmetric algebra makes sense for any object in a pre-additive coboundary category, so in particular it makes sense in $\cS\ointq$.
For $V \in \ointq$, the quantum symmetric algebra of $V_\o$ in $\cS \ointq$ is exactly $\ext_q(V)$, together with its $\bbZ/2$-grading given by the parity of the homogeneous components. 

In conclusion, $\ext_q(V)$ is, we think, more naturally thought of as an object of $\cS\ointq$, with its parity grading. 
Even though the process outlined in the previous paragraph exhibits it as the quantum \emph{symmetric} algebra on $V_\o\in\cS\ointq$, we will nevertheless use the notation $\ext_q(V_\o)$ when we wish to emphasize this super-representation structure.


\subsection{Symmetrization and antisymmetrization}

The following result exhibits the quantum symmetric and exterior algebra of an object $V\in\ointq$ as subobjects of the tensor algebra $T(V)$ in a canonical way. The resulting projections $T(V)\twoheadrightarrow S_q(V)\hookrightarrow T(V)$ and $T(V)\twoheadrightarrow \ext_q(V)\hookrightarrow T(V)$ can be regarded as $q$-analogues of the usual symmetrization and antisymmetrization operators.
We caution that explicit formulas for these operators in terms of the braiding appear to be very complicated except in simple cases, such as when the braiding satisfies a Hecke-type relation.

\begin{prop}
  \label{prop:embeddings}
  For each $V \in \ointq$ and $n \ge 2$, the natural composite morphisms
  \[
  \begin{gathered}
    S^n_q V \hookrightarrow V^{\otimes n} \twoheadrightarrow  S^n_q(V),\\
    \ext^n_q V \hookrightarrow V^{\otimes n} \twoheadrightarrow \ext^n_q(V)
  \end{gathered}
  \]
  are isomorphisms in $\ointq$.
\end{prop}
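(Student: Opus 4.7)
The plan is to exploit the fact that the coboundary $\sigma = \sigma_{V,V}$ is a self-adjoint unitary $\uqg$-module endomorphism of $V\otimes V$ with eigenvalues $\pm 1$, and to use orthogonal complements with respect to the $\uqg$-invariant inner product on $V^{\otimes n}$ described in \S\ref{sec:uqgreps}. I will treat the symmetric case; the exterior case is identical (or follows via the super-category discussion of \S\ref{sec:super-reps}).

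First I would record the structural facts: since $\sigma_{V,V}^{*}=\sigma_{V,V}$ and $\sigma_{V,V}^{2}=\id$, each operator $\sigma_i$ acting in positions $i, i+1$ of $V^{\otimes n}$ is a self-adjoint unitary morphism in $\ointq$, and hence $V^{\otimes n}$ splits as an orthogonal direct sum of $\uqg$-submodules
\[ V^{\otimes n} = E_i^{+}\oplus E_i^{-}, \qquad E_i^{\pm} = V^{\otimes (i-1)}\otimes W^{\pm} \otimes V^{\otimes(n-i-1)}, \]
where $W^{+} = S^{2}_{q}V$ and $W^{-} = \ext^{2}_{q}V$. By the very definition \eqref{eq:symvectors},
\[ S^{n}_{q}V \;=\; \bigcap_{i=1}^{n-1} E_i^{+}, \]
while the degree-$n$ component $J_n$ of the homogeneous ideal $\langle \ext^{2}_{q}V\rangle\subseteq T(V)$ is
\[ J_n \;=\; \sum_{i=1}^{n-1} E_i^{-}, \]
so that $S^n_q(V)=V^{\otimes n}/J_n$.

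Next I would take orthogonal complements. Because $E_i^{+}$ and $E_i^{-}$ are mutually orthogonal (being $\pm 1$ eigenspaces of a self-adjoint operator), we have $(E_i^{-})^{\perp}=E_i^{+}$, and thus
\[ J_n^{\perp} \;=\; \Bigl(\sum_{i} E_i^{-}\Bigr)^{\perp} \;=\; \bigcap_{i} (E_i^{-})^{\perp} \;=\; \bigcap_{i} E_i^{+} \;=\; S^{n}_{q}V. \]
Consequently $V^{\otimes n} = S^{n}_{q}V\oplus J_n$ as an orthogonal direct sum of $\uqg$-modules, and the natural composite $S^{n}_{q}V\hookrightarrow V^{\otimes n}\twoheadrightarrow V^{\otimes n}/J_n = S^{n}_{q}(V)$ is therefore an isomorphism in $\ointq$. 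The exterior case is obtained either by swapping the roles of $+1$ and $-1$ eigenspaces or, more elegantly, by invoking $\ext_{q}(V)=S_{q}(V_{\o})$ inside $\cS\ointq$.

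The only real subtlety is that an object $V\in\ointq$ need not be finite-dimensional; however the inner product on $V^{\otimes n}$ is defined summand-by-summand from the decomposition into finite-dimensional irreducibles, and the orthogonal complement identity used above holds for any (orthogonal) eigenspace decomposition of a self-adjoint involution, so no analytic subtlety arises. This is the step I would be most careful about writing up cleanly, but it presents no genuine obstacle.
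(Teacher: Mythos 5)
Your argument is essentially identical to the paper's: both pass to the orthogonal complement of the degree-$n$ component of the defining ideal, use that each $\sigma_i$ is a self-adjoint involution (hence a $\uqg$-module map with orthogonal $\pm 1$-eigenspaces) to turn the sum $\sum_i \ker(\sigma_i+\id)$ into the intersection $\bigcap_i \ker(\sigma_i-\id)=S^n_qV$, and conclude from the resulting orthogonal $\uqg$-module decomposition $V^{\otimes n}=S^n_qV\oplus J^n$. The only differences are cosmetic (your explicit tensor-product description of $E_i^{\pm}$ and the remark on the infinite-dimensional case), and your treatment of the exterior case via $\cS\ointq$ matches the paper's intent.
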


\begin{proof}
  We carry out the proof in the first case, and the second follows analogously.  Let us denote $J = \langle \ext^2_q V \rangle$, and the degree $n$ component of this ideal by $J^n = J \cap V^{\otimes n}$.  Then we have
\[ J^n = \sum_{i = 1}^{n-1} \ker (\sigma_i + \id),  \]
and so
\begin{equation}
  (J^n)^\perp =  \left(\sum_{i = 1}^{n-1} \ker (\sigma_i + \id)\right)^\perp = \bigcap_{i=1}^{n-1}  \ker (\sigma_i + \id)^\perp. \label{eq:emb1}
\end{equation}
In \S \ref{sec:cactus} we observed that $\sigma$ is self-adjoint, and hence each $\sigma_i$ is self-adjoint as well.  Since $\sigma_i$ is involutive, we have
\[ \ker (\sigma_i + \id)^\perp = \ker(\sigma_i - \id),  \]
so the right-hand side of (\ref{eq:emb1}) is equal to
\[ \bigcap_{i=1}^{n-1}  \ker(\sigma_i - \id) = S^n_q V.  \]
Then we have $V^{\otimes n} = J^n \oplus S^n_q V$, and the conclusion follows.
\end{proof}

\begin{rem}
  \label{rem:coalg_structures}
  Corollary 2.10 of \cite{BerZwi08} asserts that the modules $S_q V$ and $\ext_q V$ are naturally coalgebras in $\ointq$ with the deconcatenation coproduct.  
  On the other hand $S_q(V)$ and $\ext_q(V)$ are naturally algebras in $\ointq$. 
  Proposition \ref{prop:embeddings} above allows us to transfer the coalgebra structures naturally to $S_q(V)$ and $\ext_q(V)$.  
  It would be interesting to study the compatibility of the algebra and coalgebra structures.
\end{rem}


\section{Quantum symmetric algebras are commutative}
\label{sec:coboundary}

Classically, for a vector space $V$, the symmetric algebra $S(V)$ is the enveloping commutative algebra of $V$ in $\vect$.
It is natural to ask whether the same is true of quantum symmetric algebras, for a suitably defined notion of commutativity. 

Recall that an algebra in a monoidal category $\cC$ is an object $A$ with morphisms $m_A : A \otimes A \to A$ and $u_A : 1_\cC \to A$ satisfying the usual associativity and unit axioms. 
If $\cC$ is equipped with a braiding $(\cR_{V,W})$, there is an obvious notion of commutative algebra $A$, namely requiring $m_A \circ \cR_{A,A} = m_A$.
The algebras $S_q(V)$ are usually not commutative in this sense, essentially because the braidings $\cR_{V,V}$ generally do not have 1 as an eigenvalue.
The following notion is more appropriate for our purposes:

\begin{dfn}
  \label{dfn:comm_alg}
  We say that an algebra object $A$ in a coboundary category $\cC$ is \emph{commutative} if $m_A \circ \gamma_{A,A} = m_A$, where $\gamma_{A,A}$ is the coboundary operator.
  For an object $V$ of $\cC$, an \emph{enveloping commutative algebra of $V$} is a commutative algebra $A$ in $\cC$ with a morphism $V \to A$ such that any morphism from $V$ to a commutative algebra $B$ in $\cC$ factors uniquely through a map of algebras $A \to B$.
\end{dfn}
As usual with such universal constructions, if $V \to A$ exists as above then it is unique up to unique isomorphism.
Our goal in this section is to prove the following result:

\begin{thm}
  \label{thm:enveloping_comm_alg}
  Let $q > 0$ be transcendental and let $V \in \ointqf$.
  \begin{enumerate}[(a)]
  \item The composition $V \to T(V) \to S_q(V)$ makes $S_q(V)$ into an enveloping commutative algebra of $V$ in $\ointq$.
  \item The composition $V_\o \to T(V_\o) \to \Lambda_q(V_\o)$ makes $\Lambda_q(V_\o)$ into an enveloping commutative algebra of $V_\o$ in $\cS \ointq$.
  \end{enumerate}
\end{thm}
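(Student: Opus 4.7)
The plan is to identify $S_q(V)$ with the graded submodule $\bigoplus_n S^n_q V$ of $T(V)$ via Proposition \ref{prop:embeddings}, under which multiplication becomes concatenation followed by the orthogonal projection $\pi:V^{\otimes(m+n)}\twoheadrightarrow S^{m+n}_q V$. Commutativity of $S_q(V)$ in the coboundary sense then reduces to a single coherence identity in the cactus group, and the universal property drops out from a one-line sign argument. Transcendence of $q$ is not needed for this approach beyond what was already used to establish the polar decomposition underlying $\sigma$.

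\textbf{Commutativity.} On the degree-$(m,n)$ summand, the coboundary $\sigma_{S_q(V),S_q(V)}$ agrees with $\sigma_{V^{\otimes m}, V^{\otimes n}}$, which in the notation of \eqref{eq:sigma_prq} is $\sigma_{1,m,m+n}$ acting on $V^{\otimes(m+n)}$. By the well-definedness of $s_{p,t}$ recalled around \eqref{eq:s_pq}, the identity
\[ s_{1,m+n} = \sigma_{1,m,m+n}\circ(s_{1,m}\otimes s_{m+1,m+n}) \]
holds on $V^{\otimes(m+n)}$, since both sides are legitimate compositions of the $\sigma_{p',r,t'}$'s with $1\le p'\le r<t'\le m+n$ effecting the reversal of $(1,\dots,m+n)$. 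For $u\in S^m_q V$ and $v\in S^n_q V$, the identity $S^k_q V = \bigcap_i\ker(\sigma_i - \id)$ forces $s_{1,m}(u)=u$ and $s_{m+1,m+n}(v)=v$, so $\sigma_{1,m,m+n}(u\otimes v) = s_{1,m+n}(u\otimes v)$. Now $s_{1,m+n}$ is a product of the self-adjoint unitary involutions $\sigma_i$, hence unitary; it fixes $S^{m+n}_q V$ pointwise and, by unitarity, also preserves its orthogonal complement, so it commutes with $\pi$. Therefore
\[ \pi\bigl(\sigma_{V^{\otimes m}, V^{\otimes n}}(u\otimes v)\bigr) = s_{1,m+n}\bigl(\pi(u\otimes v)\bigr) = \pi(u\otimes v), \]
which is exactly the relation $m\circ\sigma = m$ for $A=S_q(V)$.

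\textbf{Universal property and the super case.} Given a commutative algebra $A$ in $\ointq$ and $f:V\to A$, extend $f$ uniquely to an algebra morphism $\tilde f:T(V)\to A$; it suffices to show $\tilde f$ annihilates $\ext^2_q V\subseteq V\otimes V$. For $x\in\ext^2_q V$ one has $\sigma_{V,V}(x)=-x$, so naturality of $\sigma$ together with commutativity of $A$ yield
\[ \tilde f(x) = m_A(f\otimes f)(x) = m_A\circ\sigma_{A,A}\circ(f\otimes f)(x) = m_A\circ(f\otimes f)\circ\sigma_{V,V}(x) = -\tilde f(x), \]
so $\tilde f(x)=0$. Uniqueness of the induced $\bar f:S_q(V)\to A$ is immediate because $V$ generates $S_q(V)$ as an algebra. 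Part (b) is proved identically in $\cS\ointq$: the super-braiding on $V_\o\otimes V_\o$ is the negative of the braiding on $V\otimes V$, swapping the $\pm 1$-eigenspaces of $\sigma_{V_\o,V_\o}$, so the quantum symmetric algebra of $V_\o$ in $\cS\ointq$ is precisely $\Lambda_q(V_\o)$, and the arguments above apply verbatim.

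I do not anticipate serious obstacles. The one point to verify carefully is the cactus-coherence identity displayed above, but this is an immediate application of the well-definedness of $s_{p,t}$ as a composite of the $\sigma_{p',r,t'}$'s, guaranteed by \cite{HenKam06}.
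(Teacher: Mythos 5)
Your reduction of commutativity via the cactus coherence identity
\[
s_{1,m+n} \;=\; \sigma_{1,m,m+n}\circ\bigl(s_{1,m}\cdot s_{m+1,m+n}\bigr)
\]
is a valid and slightly tidier packaging than the diagram chase in Proposition~\ref{prop:commutative-algebra}, and the universal-property and super-case arguments are correct and essentially identical to the paper's. However, the commutativity argument contains a genuine gap at exactly the point where the paper has to do the real work.

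You assert that ``the identity $S^k_q V = \bigcap_i\ker(\sigma_i - \id)$ forces $s_{1,m}(u)=u$'' and, a sentence later, that $s_{1,m+n}$ ``fixes $S^{m+n}_q V$ pointwise.'' Neither of these is immediate from the definitions. The operators $\sigma_i=s_{i,i+1}$ do \emph{not} generate the cactus group $J_n$ (this is stressed in Remark~\ref{rem:cactus-group-fixes-symq}), so being fixed by every $\sigma_i$ does not formally imply being fixed by the interval reversals $s_{1,m}$ or $s_{1,m+n}$. In fact there is no reason to expect this implication for an arbitrary coboundary structure. That $S^n_q V$ is fixed pointwise by all of $J_n$ is precisely the content of Proposition~\ref{prop:cactus-group-fixes-symq}, and its proof is the technical heart of the section: it proceeds by induction, reduces to showing $\rho_q(s_{1,n})$ has no eigenvalue $-1$ on $S^n_q V$, writes the putative eigenvector equation as a system of rational identities in $q$, uses transcendence of $q$ to propagate the identity to all parameters $\tilde q>0$, and then passes to the classical limit $\tilde q\to 1$ (Proposition~\ref{prop:switching-is-continuous}) where $\rho_1$ factors through $S_n$ to obtain a contradiction.

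This also explains why your remark that ``transcendence of $q$ is not needed for this approach beyond what was already used to establish the polar decomposition'' is a red flag rather than a simplification: the polar decomposition and the coboundary structure exist for \emph{all} $q>0$, so if your argument were complete it would prove the theorem for all such $q$. The paper explicitly states that its proof does \emph{not} extend to algebraic $q$, and the place transcendence enters is exactly the step you have skipped. To complete your proof, you must supply (or cite) the full argument of Proposition~\ref{prop:cactus-group-fixes-symq}; once that is in place, your coherence-identity reduction does the rest.
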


We expect that the conclusion of this theorem holds for all positive $q \neq 1$, but our proof does not extend to algebraic deformation parameters.
Before going into the proof, we need some preparation.


\subsection{Classical limits of coboundary structures}
\label{sec:classical_lim}

We now consider the coboundary structure on $\oint$ described in \S \ref{sec:cactus}.  
Our goal is to show that, for a fixed object $V$, the action of the cactus group $J_n$ on $V^{\otimes n}$ is continuous with respect to the parameter $q$, and specializes to the usual action of $S_n$ at $q=1$.

Some work is needed in order to make this goal precise.  
For each  $q > 0$ there is a Hopf algebra $\uqg$ with its corresponding category $\ointq$ of integrable modules.
In order to make rigorous arguments about continuity of families of operators, for each $\lambda \in \cP^+$ we need to somehow be able to identify the underlying vector spaces of the simple $\uqg$-modules $V_\lambda$ for all $q$.  
Kashiwara's notion of global crystal basis \cite{Kas91}, \cite{Lus10}*{\S 14.4} is a tool which will allow us to make this identification coherently.  
The crystal basis is constructed in the context of a formal deformation parameter $\nu$.  
We want to work with complex deformation parameters, and therefore we will need to make sure that the arguments involving the crystal basis can be specialized to complex numbers.

\begin{dfn}
  \label{dfn:universal-v-lambda}
  For $\lambda \in \cP^+$, we let $\vlc$ be the complex vector space with basis $B_\lambda$, where $B_\lambda$ is the global crystal basis for the $\uvq$-module $V_\lambda$.
\end{dfn}

We will use $\vlc$ as the underlying vector space of the simple $\uqg$-modules with highest weight $\lambda$ for all $q > 0$.
The relevance of the crystal basis is that the pair $(V_\lambda, B_\lambda)$ is a \emph{based module} for $\uqqg$ in Lusztig's sense \cite{Lus10}*{\S 27.1.2, \S 27.1.4}.
The property that we need is that $\uvz$ preserves the $\zv$-submodule of $V_\lambda$ generated by $B_\lambda$.
This is important for us because Laurent polynomials are specializable at any nonzero complex number.

\begin{conv}
  Let $V$ be a $\Qv$-vector space with a distinguished basis $B$, and let $T : V \to V$ be a linear map.
  If $T$ preserves the $\zv$-span of $B$ (or equivalently, the matrix coefficients of $T$ with respect to $B$ lie in $\zv$) then we will say that $T$ \emph{acts on $B$ by Laurent polynomials}.
\end{conv}

We will now define an action of each $\uqg$ on $\vlc$ for $q > 0$.
We need to treat the $q \neq 1$ and $q = 1$ cases separately because the generators $K_i$ do not have analogues in $U_1(\fg)$.

For $q \neq 1$, we define the action of the generators $E_i,F_i,K_i$ of $\uqg$ on the basis $B_\lambda$ by specializing the actions of the generators $E_i,F_i,K_i$ of $\uvq$, respectively, at $\nu = q$.
This specialization is possible because the matrix coefficients of the generators of $\uvq$ with respect to $B_\lambda$ lie in $\zv$, and hence can be evaluated at $\nu = q$ for $q \neq 0$.
This defines a representation since the relations of $\uqg$ are obtained from those of $\uvq$ by replacing $\nu$ with $q$.

For $q = 1$, we define operators $e_i,f_i,$ and $h_i$ on $\vlc$ by specializing the actions on $B_\lambda$ of the elements $E_i,F_i$, and $\frac{K_i -K_i^{-1}}{\nu_i - \nu_i^{-1}}$ of $\uvq$, respectively, at $\nu = 1$.
Note that $\frac{K_i -K_i^{-1}}{\nu_i - \nu_i^{-1}}$ scales a vector of weight $\mu$ by 
\[ \frac{\nu_i^{(\mu \mid \alpha_i^{\vee} ) }- \nu_i^{-(\mu \mid \alpha_i^\vee)}}{\nu_i - \nu_i^{-1}}, \]
which is a Laurent polynomial, and hence specializing the action of this element at $\nu = 1$ makes sense.
We now show that these operators define a representation of $U_1(\fg)$.

\begin{lem}
  \label{lem:specialize-to-one}
  The operators $e_i,f_i$, and $h_i$ on $\vlc$ satisfy the relations
  \[  [h_i, e_j] = a_{ij} e_j, \qquad [h_i,f_j] = -a_{ij} f_j, \qquad [e_i, f_j] = \delta_{ij} h_i,   \]
  as well as the Serre relations
  \[ \sum_{N=0}^{1-a_{ij}} (-1)^N \binom{1-a_{ij}}{N} e_i^{1-a_{ij}-N}e_je_i^{N} = 0 \]
and
\[ \sum_{N=0}^{1-a_{ij}} (-1)^N \binom{1-a_{ij}}{N} f_i^{1-a_{ij}-N}f_jf_i^{N} = 0 ,\]
  and hence $E_i \mapsto e_i$, $F_i \mapsto f_i$, $H_i \mapsto h_i$ determines a representation of $U_1(\fg)$ on $\vlc$.
\end{lem}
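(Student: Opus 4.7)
My plan is to realize each of the desired $U_1(\fg)$-relations as the $\nu\to 1$ specialization of a matching identity on the $\zv$-lattice $L_\lambda\subseteq V_\lambda$ spanned by the global crystal basis $B_\lambda$. The preliminary observation is that $E_i,F_i,K_i^{\pm 1}$ preserve $L_\lambda$ because $(V_\lambda,B_\lambda)$ is a based $\uvq$-module, and that $\tilde H_i := (K_i-K_i^{-1})/(\nu_i-\nu_i^{-1})$ does too: on a weight vector $v_\mu$ it acts as the quantum integer $[\langle\alpha_i^\vee,\mu\rangle]_{\nu_i}\in\zv$, which at $\nu=1$ specializes to the integer $\langle\alpha_i^\vee,\mu\rangle$. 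Consequently, any polynomial relation in $E_i$, $F_i$, $\tilde H_i$ that holds as an endomorphism identity on $L_\lambda$ with $\zv$-valued matrix entries descends under $\nu\mapsto 1$ to an identity of endomorphisms of $\vlc$.

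With this specialization device in place I would dispatch the commutator relations as follows. The identity $[e_i,f_j]=\delta_{ij}h_i$ is immediate from specializing the $\uvq$-relation $E_iF_j-F_jE_i=\delta_{ij}\tilde H_i$ at $\nu=1$. For $[h_i,e_j]=a_{ij}e_j$ and $[h_i,f_j]=-a_{ij}f_j$ I would argue directly on the weight basis $B_\lambda$: once $h_i$ has been identified with multiplication by the integer $\langle\alpha_i^\vee,\mu\rangle$ on the weight-$\mu$ component, these relations reduce to the observation that $e_j,f_j$ shift weight by $\pm\alpha_j$ together with the Cartan-matrix identity $\langle\alpha_i^\vee,\mu\pm\alpha_j\rangle-\langle\alpha_i^\vee,\mu\rangle=\pm a_{ij}$.

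For the Serre relations I would start from the quantum Serre identity \eqref{eq:uqg-rel-serre} in $\uvq$ (with $\nu$ replacing $q$), clear the divided-power denominators by multiplying through by $[1-a_{ij}]_{\nu_i}!$, and rewrite it as $\sum_N(-1)^N\binom{1-a_{ij}}{N}_{\nu_i}E_i^{1-a_{ij}-N}E_jE_i^{N}=0$, whose scalar coefficients are Laurent polynomials in $\nu$. Specialization at $\nu=1$ sends $\binom{1-a_{ij}}{N}_{\nu_i}$ to the ordinary binomial coefficient, yielding the claimed Serre relation for the $e_i$'s; the $f_i$ case is identical. Once all defining relations of $\uoneg$ are in place, the assignment $E_i\mapsto e_i$, $F_i\mapsto f_i$, $H_i\mapsto h_i$ extends to a representation by the universal property. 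The only step requiring vigilance is confirming that after clearing denominators the rewritten Serre identity really holds as a $\zv$-matrix identity on $L_\lambda$, not just over $\Qv$; but since the quantum binomial coefficients lie in $\zv$ and the monomials $E_i^kE_jE_i^{\ell}$ preserve $L_\lambda$, this is a bookkeeping check rather than a structural obstacle.
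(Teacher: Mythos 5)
Your proof is correct, and the overall strategy (view the desired relations as $\nu\to 1$ specializations of $\zv$-integral identities on the crystal lattice) is the same as the paper's. The one place you genuinely diverge is the Cartan commutators $[h_i,e_j]=a_{ij}e_j$ and $[h_i,f_j]=-a_{ij}f_j$. The paper proves these by computing $\bigl[\tfrac{K_i-K_i^{-1}}{\nu_i-\nu_i^{-1}},E_j\bigr]$ algebraically inside $\uvq$, observing that the resulting scalar $\tfrac{\nu_i^{2a_{ij}}-1}{(\nu_i-\nu_i^{-1})(\nu_i^{a_{ij}}+1)}$ lies in $\zv$ and specializes to $\tfrac{a_{ij}}{2}$, and then using that $K_i^{\pm 1}$ specializes to the identity. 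You instead bypass the computation in $\uvq$ entirely: you identify $h_i$ as multiplication by the integer $\langle\alpha_i^\vee,\mu\rangle$ on the weight-$\mu$ subspace and read off the commutator from the fact that $e_j,f_j$ shift weight by $\pm\alpha_j$. Your route is more elementary and avoids checking specializability of the Laurent coefficient; the paper's route stays uniformly inside $\uvq$ and treats all relations by the same specialization mechanism. Both are valid, and your handling of the Serre relations (clear divided-power denominators, land on $\zv$ quantum binomials, specialize to classical binomials) fills in the step the paper calls ``immediate.''
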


\begin{rem}
  \label{rem:limit-is-what-it-should-be}
  It follows from Lemma \ref{lem:continuity-of-uqg-actions} below that $\vlc$ is the simple $\fg$-module of highest weight $\lambda$.
\end{rem}

\begin{proof}
  The Serre relations for the $e_i$ and $f_i$ follow immediately from the quantum Serre relations \eqref{eq:uqg-rel-serre}, and the relation $[e_i,f_j] = \delta_{ij}h_i$ follows from \eqref{eq:uqg-rel-ef}.

  In $\uvq$ we have the relation
  \begin{align*}
     \left[\frac{K_i - K_i^{-1}}{\nu_i - \nu_i^{-1}}, E_j\right] & = \frac{\nu_i^{a_{ij}}-1}{\nu_i - \nu_i^{-1}} (E_jK_i + K_i^{-1}E_j) \\
     & = \frac{\nu_i^{2a_{ij}}-1}{(\nu_i - \nu_i^{-1})(\nu_i^{a_{ij}}+1)} (E_jK_i + K_i^{-1}E_j).
  \end{align*}
  Since $\nu_i - \nu_i^{-1}$ divides $\nu_i^{2a_{ij}}-1$ in $\zv$, the fraction is specializable at $\nu = 1$, and its value is $\frac{a_{ij}}{2}$.
  Since $K_i$ specializes to the identity, this gives the relation $[h_i,e_j] = a_{ij} e_j$.
  The relation $[h_i,f_j] = -a_{ij} f_j$ follows similarly.
\end{proof}

Hence $\vlc$ carries an action of $\uqg$ for all $q > 0$.
Next we will show that these actions are continuous in $q$ in an appropriate sense.
For a fixed $i$, the action of $E_i \in \uqg$ determines a family of operators on $\vlc$ parametrized by $q  > 0$, and similarly for the $F_i$.
For the Cartan parts of the algebras, for uniformity of notation we define elements $H_i \in \uqg$ and $H_i \in \uvq$ by
\begin{equation*}
  H_i = \frac{K_i - K_i^{-1}}{q_i - q_i^{-1}} \quad \text{ and } \quad H_i = \frac{K_i - K_i^{-1}}{\nu_i - \nu_i^{-1}},
\end{equation*}
respectively, for $1 \le i \le r$.
Then the action of $H_i \in \uqg$ also determines a family of operators on $\vlc$ indexed by $q > 0$.

\begin{lem}
  \label{lem:continuity-of-uqg-actions}
  For each $i$, the family of operators on $\vlc$ given by the action of $E_i \in \uqg$ for $q > 0$ is continuous in $q$, and similarly for the families coming from $F_i$ and $H_i$.
\end{lem}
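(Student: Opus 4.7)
The plan is to reduce the question to the defining property of the global crystal basis that makes $(V_\lambda, B_\lambda)$ a based module for $\uvq$ in Lusztig's sense: the integral form $\uvz$ preserves the $\zv$-span of $B_\lambda$. Since $\vlc$ is finite-dimensional, continuity of a $q$-family of operators on $\vlc$ is the same as continuity of each matrix coefficient with respect to the fixed basis $B_\lambda$. I will therefore exhibit each such matrix coefficient for $E_i$, $F_i$, and $H_i$ as the restriction to $(0,\infty)$ of a Laurent polynomial in $\nu$, evaluated at $\nu = q$; this makes continuity (in fact analyticity) on $(0,\infty)$ automatic.

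For $E_i$ and $F_i$, the argument is direct: both elements lie in $\uvz$ (they equal their own first divided powers), so by the based-module property their matrix coefficients on $B_\lambda$ lie in $\zv$. By the very definition of the $\uqg$-action for $q \neq 1$, and of the operators $e_i, f_i$ for $q = 1$ in Lemma \ref{lem:specialize-to-one}, the action of $E_i$ (resp.\ $F_i$) on $\vlc$ at any parameter $q > 0$ is obtained by evaluating these Laurent polynomials at $\nu = q$. Continuity in $q$ is then immediate, and consistency at $q = 1$ is built into the construction.

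For $H_i$, I will use that $B_\lambda$ is a weight basis. If $b \in B_\lambda$ has weight $\mu$, then $K_i b = \nu_i^{(\mu \mid \alpha_i^\vee)} b$, so
\[ H_i\, b \;=\; \frac{\nu_i^{(\mu \mid \alpha_i^\vee)} - \nu_i^{-(\mu \mid \alpha_i^\vee)}}{\nu_i - \nu_i^{-1}}\, b \;=\; [(\mu \mid \alpha_i^\vee)]_{\nu_i}\, b, \]
and $[n]_{\nu_i}$ is a genuine Laurent polynomial in $\nu$ for every integer $n$. Thus $H_i$ acts diagonally on $B_\lambda$ with Laurent-polynomial entries, the same specialization argument applies, and at $\nu = 1$ one recovers $[n]_{\nu_i}|_{\nu = 1} = n = (\mu \mid \alpha_i^\vee)$, matching the definition of $h_i$.

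There is no substantive obstacle: the only delicate point is ensuring that the apparent singularity in $H_i$ at $q = 1$ actually cancels, and this is taken care of by $K_i$ acting on weight vectors by monomials in $\nu_i^{\pm 1}$ of the correct parity. Everything else is a matter of recognizing that the three $q$-dependent families have been constructed, by design, as specializations at $\nu = q$ of a single Laurent-polynomial matrix over $\zv$ in the basis $B_\lambda$.
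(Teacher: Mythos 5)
Your proof is correct and follows essentially the same approach as the paper's, which is a one-line argument stating that $E_i$, $F_i$, $H_i$ act on $B_\lambda$ by Laurent polynomials (by the based-module property for $E_i,F_i$ and by direct computation on the weight basis for $H_i$) and that the $\uqg$-actions are obtained by specializing at $\nu = q$. You simply spell out the ingredients the paper had already assembled in the preceding paragraphs, including the cancellation of the apparent singularity in $H_i$ at $q=1$.
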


\begin{proof}
  This follows from the fact that $E_i,F_i,H_i \in \uvq$ act on the basis $B_\lambda$ by Laurent polynomials in $\nu$, and the actions of $E_i,F_i,H_i \in \uqg$ are obtained by specializing to $\nu = q$.
\end{proof}

Now we would like to extend this identification to non-simple modules and to tensor products.
For a finite-dimensional $U_{q_0}(\fg)$-module $V$, we want to construct a complex vector space $V^\bbC$ carrying an action of $\uqg$ for all $q>0$ such that $V \cong V^\bbC$ as $U_{q_0}(\fg)$-modules.
We also want these actions to be continuous in $q$ as in Lemma \ref{lem:continuity-of-uqg-actions}.
To do this, we choose a decomposition 
\[ V \cong \bigoplus_{j} V_{\lambda_j}   \]
of $V$ into simple $U_{q_0}(\fg)$-modules, and define
\[ V^\bbC =  \bigoplus_{j} V_{\lambda_j}^\bbC;   \]
it is clear that $V^\bbC$ has the desired properties. 
To extend this to tensor products, given finite-dimensional $U_{q_0}(\fg)$-modules $V_1, \dots, V_n$, we define
\[ (V_1 \otimes \dots \otimes V_n)^\bbC = V_1^\bbC \otimes \dots \otimes V_n^\bbC.  \]
Since all $\uqg$ act on the individual modules $V_i$, they also act on the tensor product, and again the actions are continuous in $q$.

\begin{rem}
  \label{rem:decomp-not-canonical}
  For a $U_{q_0}(\fg)$-module $X$, there is some ambiguity in the notation $X^\bbC$.
  We emphasize that whenever we present $X$ as a tensor product $X = V_1 \otimes \dots \otimes V_n$, by $X^\bbC$ we mean that we choose decompositions of the individual tensor factors $V_i$ and then take the tensor product of the individual $V_i^\bbC$ rather than an arbitrary decomposition of $X$ into simple modules.

  Of course, $X^\bbC$ also depends on the decomposition of each $V_i$ into simple submodules.
  This is not a problem, however, since we only perform this construction for one module at a time.
  In other words, we neither assert, nor require, that $X \mapsto X^\bbC$ is a functor.
\end{rem}

\begin{conv}
  \label{conv:dropping-upper-C}
  For a $U_{q_0}(\fg)$-module $V$, the vector space $V^\bbC$ carries an action of $\uqg$ for all $q>0$, and $V^\bbC \cong V$ as $U_{q_0}(\fg)$-modules.
  From now on, we will tacitly replace $V$ by $V^\bbC$, thereby allowing all $\uqg$ to act on $V$ itself.
  Remark \ref{rem:decomp-not-canonical} always applies to tensor products.

  For any $V$ and $W$, the $R$-matrices for the various $\uqg$ form a family of operators $R_q$ on $V \otimes W$, and we define a family $\bar{R}_q$ similarly.
  For $n \ge 2$, the cactus group $J_n$ acts on $V^{\otimes n}$ as in \S \ref{sec:cbdry_intro}, and we denote the resulting homomorphism by $\rho_q : J_n \to GL(V^{\otimes n})$.
\end{conv}

In order to discuss specializability and continuity of the coboundary operators, we need the following auxiliary result:

\begin{lem}
  \label{lem:matrix-coeffs-are-nice}
  Let $\lambda \in \cP^+$ and $B_\lambda$ the global crystal basis of $V_\lambda$.
  Let $V,W \in \ointvf$ and let $B$ be a basis for $V \otimes W$ as constructed above.
  \begin{enumerate}[(a)]
  \item The divided powers $E_\beta^{(N)}$, $F_\beta^{(N)}$ of the quantum root vectors act on $B_\lambda$ by Laurent polynomials.
  \item The $R$-matrix $R$ of $\uvq$ acts on $B$ by Laurent polynomials.
  \item Let $\bar{R} : V \otimes W \to V \otimes W$ be the action of $\bar{R}$ as in \S \ref{sec:cactus}.  The matrix coefficients of $\bar{R}$ with respect to the basis $B$ lie in $\qvm$, where $m$ is the smallest positive integer such that $m (\cP | \cP) \subseteq 2 \bbZ$.
  \end{enumerate}
\end{lem}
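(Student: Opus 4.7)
The plan is to prove (a), (b), (c) in sequence, each part building on the previous.

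\emph{Part (a).} The key tool is Lusztig's integral form $\uvz$. Two standard facts suffice: first, the braid group automorphisms $\cT_i$ of $\uvq$ restrict to automorphisms of $\uvz$, so the quantum root vectors $E_\beta, F_\beta$ lie in $\uvz$; second, by Lusztig (see \cite{Lus10}*{\S 41.1}), their divided powers $E_\beta^{(N)}, F_\beta^{(N)}$ also lie in $\uvz$. Since $(V_\lambda, B_\lambda)$ is a based module, the $\zv$-span of $B_\lambda$ is a $\uvz$-stable lattice, so these divided powers act on $B_\lambda$ by Laurent polynomials in $\nu$.

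\emph{Part (b).} Use Lusztig's explicit factorization $R = K \cdot \Theta$, where $K$ is the Cartan factor acting diagonally on weight vectors $v_\mu \otimes v_\eta$ as $\nu^{(\mu \mid \eta)}$, and $\Theta$ is a sum -- finite on tensor products of finite-dimensional modules -- of terms of the form $c \cdot E_\beta^{(N_\beta)} \otimes F_\beta^{(N_\beta)}$ with $c \in \zv$. Part (a) directly controls the $\Theta$-factor. The Cartan factor $K$ involves only powers $\nu^{(\mu \mid \eta)}$ with $\mu, \eta \in \cP$, so (after possibly passing to fractional powers of $\nu$) it lies in the appropriate Laurent-polynomial ring and acts on $B$ accordingly.

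\emph{Part (c).} Start from the polar decomposition $\bar{R} = R \cdot (R_{21} R)^{-1/2}$. By direct-sum decomposition of $V$ and $W$ it suffices to treat simple $V = V_\mu$ and $W = V_\eta$. Since $R_{21}R$ is a $\uvq$-endomorphism of $V \otimes W$, Schur's lemma implies it acts as a scalar on each simple summand, and a standard ribbon-element computation identifies this scalar on a summand of type $V_\lambda$ as
\[
\nu^{(\lambda \mid \lambda + 2\rho) - (\mu \mid \mu + 2\rho) - (\eta \mid \eta + 2\rho)}.
\]
By the definition of $m$ this exponent lies in $\frac{2}{m}\bbZ$, so $(R_{21}R)^{-1/2}$ acts by a scalar in $\qvm$ on each summand. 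The orthogonal projectors onto isotypic components lie in the $\Qv$-algebra $\End_{\uvq}(V \otimes W)$, hence have matrix entries in $\Qv \subseteq \qvm$ with respect to $B$. Combining this with part (b) yields the stated bound on the matrix coefficients of $\bar{R}$.

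The main obstacle is part (c): extracting the eigenvalues of $R_{21}R$ via the ribbon element and verifying that the spectral projectors onto isotypic components have matrix entries in $\Qv$ (rather than some transcendental extension) with respect to the specific basis $B$. Once these points are pinned down, the $\qvm$-control of $(R_{21}R)^{-1/2}$ follows from the definition of $m$ and a direct computation of the square root on each isotypic component.
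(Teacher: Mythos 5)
Your proof follows essentially the same route as the paper's: part (a) via the braid-group stability of Lusztig's integral form $\uvz$ applied to a based module, part (b) via the factorization of $R$ into a Cartan part and a quasi-$R$-matrix part controlled by (a), and part (c) via the polar decomposition together with the known scalar by which $R_{21}R$ acts on each isotypic component. Two small points where you are actually more careful than the paper: you flag that the Cartan factor contributes fractional powers of $\nu$ (the exponent $(\wt v \mid \wt w)$ need not be an integer, e.g.\ already for $\mathfrak{sl}_2$), whereas the paper's proof of (b) only explicitly controls $\fR$ and silently passes over the $D$ factor, so your slightly weaker conclusion --- Laurent polynomials in a fractional power of $\nu$ --- is what one actually gets, and it suffices for (c); and you explicitly note that the isotypic projectors have matrix entries in $\Qv$ relative to $B$, a step the paper leaves implicit in the phrase ``the result follows'' (it holds because the simple decomposition of $V_\mu\otimes V_\eta$ is already defined over $\Qv$, so the projectors are $\Qv$-linear and $B$ is a $\Qv$-basis).
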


\begin{proof}
  (a) We already know that the divided powers of the generators $E_i$ and $F_i$ act on $B_\lambda$ by Laurent polynomials.
  Now we want to extend this to the quantum root vectors.

  Since $E_\beta = T(E_i)$ for some $i$, where $T$ is an automorphism coming from some element of the braid group of $\fg$, and since $\nu_\beta = \nu_i$, we have $E_\beta^{(N)} = T(E_i^{(N)})$, and similarly for $F_\beta$.

  The formulas in \cite{Lus10}*{41.1.2} can be adapted to show that $\uvz$ is preserved by the braid group action, and hence the divided powers of the quantum root vectors lie in $\uvz$. 
  (Lusztig works with a modified form $\dot{\mathbf{U}}$ of $\uvq$; note that the braid group action $\cT_i$ from \cite{KliSch97}*{6.2.2} coincides with Lusztig's $T''_{i,1}$.)
  
  Finally, elements of $\uvz$ act on $B_\lambda$ by Laurent polynomials, and in particular this is true for the divided powers of the quantum root vectors.

  (b) We can assume without loss of generality that $V$ and $W$ are simple.  We recall from \cite{KliSch97}*{8.3.3} that the action of the $R$-matrix in $V \otimes W$ is given by $R = D \circ \fR,$ where
  \begin{equation*}
    D(v \otimes w) = \nu^{(\wt(v) \mid \wt(w))} v \otimes w
  \end{equation*}
  for weight vectors $v,w$ (Klimyk-Schm\"udgen use $B_{VW}$ instead of $D$), and
  \begin{equation}
    \label{eq:rmatrix2}
    \fR = \sum_{t_1, \dots t_n = 0}^\infty \prod_{j=1}^n \frac{(1-\nu_{\beta_j}^{-2})^{t_j}}{[t_j]_{\nu_{\beta_j}}!} \nu_{\beta_j}^{t_j(t_j + 1)/2} E_{\beta_j}^{t_j} \otimes F_{\beta_j}^{t_j}.
  \end{equation}
  Here $n$ is the number of positive roots of $\fg$, the $\beta_j$ are the positive roots, and $\nu_{\beta} = \nu^{(\beta \mid \beta)}$. 
  The order in the product is determined by this decomposition as well. 
  Note that this is a finite sum since the root vectors act nilpotently in $V$ and $W$.
  
  We now show that $\mathfrak{R}$ acts on $B$ by Laurent polynomials. 
  Rearranging (\ref{eq:rmatrix2}) by dividing the $E_{\beta_j}^{t_j}$ term by $[t_j]_{\nu_{\beta_j}}!$, we get a linear combination of terms of the form $\displaystyle \prod_j E_{\beta_j}^{(t_j)} \otimes F_{\beta_j}^{t_j}$ with coefficients in $\zv$.
  The conclusion follows from applying part $\mathrm{(a)}$ to these terms.
  
  (c) By definition we have
  \[  \bar{R} = R(R_{21}R)^{-\frac12}.    \]
  By part $\mathrm{(b)}$ we only need to examine the action of the $R_{21}R$ factor.
  According to the construction of the basis $B$, it is enough to prove the statement when $V = V_\mu$ and $W = V_\nu$. 
  By \cite{KliSch97}*{8.4.2 Proposition 22}, $R_{21}R$ acts as the scalar
  \begin{equation}
    \label{eq:r21r-formula}
    \nu^{-(\mu \mid \mu + 2 \rho) - (\nu \mid \nu + 2 \rho) + (\lambda \mid \lambda + 2 \rho)}   
  \end{equation}
  on the $V_\lambda$-isotypic component of $V_\mu \otimes V_\nu$. Since the inverse square root of \eqref{eq:r21r-formula} belongs to $\qvm$, the result follows. 
\end{proof}

\begin{prop}
  \label{prop:switching-is-continuous}
  Let $V,W \in \ointqf$ and $n \ge 2$.
  Allow all $\uqg$ to act on $V$ and $W$ and let $R_q, \bar{R}_q$, and $\rho_q : J_n \to GL(V^{\otimes n})$ be as in Convention \ref{conv:dropping-upper-C}.
  \begin{enumerate}[(a)]
  \item The family $R_q$ is continuous for $q \neq 1$ and extends continuously to $q=1$, with $R_1 = \id$.
  \item The family $\bar{R}_q$ is continuous for $q \neq 1$ and extends continuously to $q=1$, with $\bar{R}_1 = \id$.
  \item For any $x \in J_n$ the family $\rho_q(x) \in GL(V^{\otimes n})$ is continuous for $q \neq 1$ and extends continuously to $q=1$.  
    The resulting homomorphism $\rho_1 : J_n \to GL(V^{\otimes n})$ factors through the canonical action of $S_n$ by means of the homomorphism (\ref{eq:cactus_quotient}).
  \end{enumerate}
\end{prop}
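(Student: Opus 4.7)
The plan is to derive all three parts from Lemma~\ref{lem:matrix-coeffs-are-nice}, treating (a) and (b) as continuity statements for explicit families of operators and then propagating them through the definition of the cactus action to get (c).

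For part (a), I would invoke Lemma~\ref{lem:matrix-coeffs-are-nice}(b): the matrix coefficients of $R$ with respect to the tensor-product basis $B$ are Laurent polynomials in $\nu$, which specialize to continuous (in fact real-analytic) functions of $q$ on $(0,\infty)$ when we set $\nu = q$. To identify the value at $q = 1$, I would appeal to the factorization $R = D \circ \fR$ used in the proof of Lemma~\ref{lem:matrix-coeffs-are-nice}(b). The diagonal factor $D$ scales a weight vector $v \otimes w$ by $\nu^{(\wt(v)\mid\wt(w))}$, which specializes to $1$ at $\nu = 1$. In the expansion \eqref{eq:rmatrix2} for $\fR$, the multi-index $(t_1,\dots,t_n)=(0,\dots,0)$ contributes the identity, while every other term carries a factor $(1-\nu_{\beta_j}^{-2})^{t_j}$ with some $t_j \ge 1$, hence vanishes at $\nu = 1$. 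Thus $R_1 = \id$.

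For part (b), by Lemma~\ref{lem:matrix-coeffs-are-nice}(c) the matrix coefficients of $\bar R$ lie in $\qvm$, so substituting a positive $m$-th root $\nu^{1/m}=q^{1/m}$ gives a continuous family on $(0,\infty)$, provided no pole appears. Here the definition $\bar R = R(R_{21}R)^{-1/2}$ and the scalar formula \eqref{eq:r21r-formula} show that $R_{21}R$ acts on each isotypic summand by a strictly positive scalar for $q>0$, so the positive square root is continuous and nowhere zero throughout $(0,\infty)$. Continuity at $q=1$ follows from part (a): both $R$ and $R_{21}R$ specialize to the identity, so their positive square roots do as well, giving $\bar R_1 = \id$.

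Part (c) is then essentially formal. The coboundary operator $\sigma_{X,Y} = \tau \circ \bar R_{X,Y}$ on an arbitrary pair of tensor products of summands of $V$ is again of the form described above (the $R$-matrix acts on products of irreducibles by the same formula), so $\sigma$ and hence each $\sigma_{p,r,t}$ of \eqref{eq:sigma_prq} is continuous in $q$ and extends continuously to $q=1$. By the recursion \eqref{eq:s_pq}, the same holds for every $s_{p,t}$, and then for every word in the generators, which yields continuity of $\rho_q(x)$ for every $x \in J_n$. At $q=1$, $\bar R$ is the identity, so $\sigma_{X,Y}$ reduces to the plain tensor flip $\tau$; applying \eqref{eq:s_pq} inductively shows that $\rho_1(s_{p,t})$ is the reversal permutation $\hat s_{p,t}$, so $\rho_1$ factors through the canonical map \eqref{eq:cactus_quotient}.

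The main obstacle is part (b), since it is the only step where a non-polynomial operation (the operator square root) enters. The issue is resolved by observing that $R_{21}R$ is \emph{positive}—and in fact diagonalizable with positive eigenvalues by \eqref{eq:r21r-formula}—on the entire ray $q>0$, so the positive square root is a well-defined, continuous family with no branch-point issues, even in a neighborhood of $q=1$ where the operator approaches the identity.
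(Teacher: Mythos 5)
Your proposal is correct and follows the same overall strategy as the paper's proof. Parts (a) and (c) match the paper's argument almost exactly: for (a), the specialization of the Laurent-polynomial matrix coefficients from Lemma~\ref{lem:matrix-coeffs-are-nice}(b) plus the observation that all nonconstant terms of~\eqref{eq:rmatrix2} vanish at $\nu=1$; for (c), continuity of the generators $\sigma_{p,r,t}$ and the recursion~\eqref{eq:s_pq} reducing $\rho_1(s_{p,t})$ to the reversal permutation. The one place you diverge is part (b): the paper does not invoke Lemma~\ref{lem:matrix-coeffs-are-nice}(c) at all, but instead deduces continuity of $\bar R_q = R_q(R_{21}R)_q^{-1/2}$ directly from the continuity of $R_q$ and $(R_{21})_q$ established in (a), together with the fact that $T\mapsto T^{-1/2}$ is continuous on operators with positive spectrum (via the holomorphic functional calculus on $\End(V\otimes W)$). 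Your route via the $\qvm$-rationality of the matrix coefficients plus the explicit scalar formula~\eqref{eq:r21r-formula} to rule out poles on $(0,\infty)$ is a valid alternative; it gives a more concrete, spectral-decomposition-based verification, at the cost of having to argue separately that no poles occur, whereas the functional-calculus argument handles this automatically once positivity of the spectrum is known.
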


\begin{proof}
  (a) By part $\mathrm{(b)}$ of Lemma \ref{lem:matrix-coeffs-are-nice}, and since Laurent polynomials in $q$ are continuous for $q \in (0,\infty)$, we conclude that $q \mapsto R_q $ is continuous. At $q=1$, the terms $(1-q_{\beta_j}^{-2})^{t_j}$ of \eqref{eq:rmatrix2} vanish except when $t_j = 0$, leaving only the term $t_1 = \dots = t_n = 0$, which acts as the identity. Hence $R_1 = \id$.
  
  (b) Recall that $\bar{R}_q = R_q(R_{21}R)_q^{-\frac{1}{2}}$; $(R_{21})_q = \tau \circ R_q \circ \tau$ is continuous in $q$, so we need only the fact that $T \mapsto T^{-\frac{1}{2}}$ is a continuous function on operators with positive spectrum. 
  This can be seen using the holomorphic functional calculus for the Banach algebra $\End(V \otimes W)$ and the fact that $x \mapsto x^{-\frac{1}{2}}$ is holomorphic on the right half-plane. Since $R_q \to \id$ by $\mathrm{(a)}$, the conclusion follows.
  
  (c) Apply part $\mathrm{(b)}$ to the operators $\sigma_{p,r,t}$ defined in \eqref{eq:sigma_prq}, which generate $J_n$. 
\end{proof}

The following result is the cornerstone of the proof that $S_q(V)$ and $\ext_q(V_\o)$ are commutative algebras in $\ointq$ and $\cS \ointq$, respectively.

\begin{prop}
  \label{prop:cactus-group-fixes-symq}
  The space $S_q^n V$ is fixed pointwise by the action of the cactus group $J_n$.
  Similarly, the space $\ext_q^n V_\o$ is fixed pointwise by the action of the cactus group $J_n$ coming from the coboundary structure on $\cS\ointq$ as in \S \ref{sec:super-reps}.
\end{prop}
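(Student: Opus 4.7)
The plan is to reduce the claim to a rank-counting statement and resolve it using the continuity established in Proposition \ref{prop:switching-is-continuous} together with the classical limit at $q=1$. Fix a generator $s := s_{p,t}$ of $J_n$. Since each coboundary operator in $\ointq$ is unitary (\S\ref{sec:cactus}) and $s^2 = 1$ in $J_n$, the operator $\rho_q(s) \in \End(V^{\otimes n})$ is a self-adjoint unitary involution; as a morphism of $\uqg$-modules it preserves both $S_q^n V$ and its orthogonal complement, hence commutes with the orthogonal projection $P_q$ onto $S_q^n V$. I would then set
\[
\Phi_q := \rho_q(s) P_q + (\id - P_q),
\]
itself a self-adjoint unitary involution on $V^{\otimes n}$; the desired conclusion $\rho_q(s)|_{S_q^n V} = \id$ is equivalent to $\Phi_q = \id$, and in turn to the rank of the orthogonal projection $(\Phi_q + \id)/2$ being $\dim V^{\otimes n}$.

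For real $q > 0$ with $q \ne 1$, $\dim S_q^n V$ is constant on each of the connected intervals $(0, 1)$ and $(1, \infty)$, so $P_q$ and thus $\Phi_q$ depend continuously on $q$ there. At $q = 1$, Proposition \ref{prop:switching-is-continuous}(c) identifies $\rho_1(s)$ with the reversal permutation $\hat s_{p,t}$, which fixes the classical symmetric subspace $S^n V$ pointwise; hence $\Phi_1 = \id$ directly. For the one-sided limits: if $v = \lim v_q$ with $v_q \in S_q^n V$, then $\sigma_i^q(v_q) = v_q$ passes to $\tau_i(v) = v$ in the limit, so the image of $P_{1^\pm} := \lim_{q \to 1^\pm} P_q$ is contained in $S^n V$, forcing $\Phi_{1^\pm} = \id$. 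On each of $(0,1)$ and $(1,\infty)$, the rank of $(\Phi_q + \id)/2$ is then a continuous, integer-valued function of $q$, hence locally constant; the one-sided limits at $q = 1$ pin this constant to $\dim V^{\otimes n}$, so $\Phi_q = \id$ on both intervals. Combined with $\Phi_1 = \id$, we conclude $\rho_q(s)|_{S_q^n V} = \id$ for all $q > 0$, and since the $s_{p,t}$ generate $J_n$ the first claim follows.

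The super case is handled by the same argument, modulo the verification that the $q=1$ incarnation of $\rho(s_{p,t})$ in $\cS\ointq$ fixes $\ext^n V \subseteq V_\o^{\otimes n}$ pointwise. Decomposing $s_{p,t}$ via \eqref{eq:s_pq} and using that $\gamma^{\csc}_{V_\o^{\otimes a}, V_\o^{\otimes b}}$ specializes at $q = 1$ to $(-1)^{ab}$ times the block flip, an easy induction on $k = t - p + 1$ shows that the total accumulated super sign is $(-1)^{k(k-1)/2}$, which exactly cancels the sign $(-1)^{k(k-1)/2}$ by which the reversal permutation on $k$ elements acts on $\ext^n V$. The main obstacle I anticipate is justifying the constancy of $\dim S_q^n V$ on the generic intervals $(0,1)$ and $(1,\infty)$ together with the existence of the one-sided limits of $P_q$ at $q = 1$; both reflect the genericity of $\uqg$-representation theory at real positive $q \ne 1$ and are most naturally framed within the crystal-basis specialization machinery developed in Section \ref{sec:classical_lim}.
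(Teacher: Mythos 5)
Your proof has two genuine gaps.

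First, the claim that $\rho_q(s)$ preserves $S_q^n V$ for an arbitrary generator $s = s_{p,t}$ is not justified by the reason you give. Being a morphism of $\uqg$-modules does not mean preserving every submodule: a module map sends submodules to (possibly different) submodules. What is actually needed is that $s_{p,t}$ normalizes the subgroup of $J_n$ generated by the transpositions $s_{i,i+1}$ (which cut out $S_q^n V$ as a common $+1$-eigenspace). This normalization property holds for $s_{1,n}$ (Lemma~4 of \cite{HenKam06}), but fails for general $s_{p,t}$ — for instance $s_{2,4} s_{1,2} s_{2,4}$ has no cactus-group simplification. The paper's proof deals with this by induction on $n$: generators $s_{p,t}$ with $t-p < n-1$ act through a copy of $J_{n-1}$ on $n-1$ consecutive tensor factors (and $S^n_q V$ sits inside, say, $V \otimes S^{n-1}_q V$), so only $s_{1,n}$ needs to be handled directly, and it is precisely $s_{1,n}$ that normalizes the relevant subgroup.

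Second, your argument hinges on the constancy of $\dim S_q^n V$ on the open intervals $(0,1)$ and $(1,\infty)$, which you correctly flag as the main obstacle — but it is not established, and it is not something the crystal-basis machinery of Section~\ref{sec:classical_lim} supplies for free. The matrix coefficients of the $\sigma_i^q$ lie in $\qvm$, so $\dim S_q^n V$ is the kernel dimension of a matrix with entries rational in $q^{1/m}$; this dimension is upper-semicontinuous in $q$ and can jump \emph{up} at algebraic values where minors vanish. Indeed, if constancy held on all of $(0,1) \cup (1,\infty)$ your argument would give the proposition for every positive $q \ne 1$, whereas the paper only proves it for transcendental $q$ and explicitly leaves the general case open. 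The paper sidesteps the issue entirely by not tracking the whole space $S_q^n V$ as $q$ varies: it supposes a single $-1$-eigenvector $v_q$ of $\rho_q(s_{1,n})$ on $S_q^n V$ exists, uses transcendence of $q$ to see that the defining rational equations (eigenvalue equation plus membership in $S_q^n V$) hold identically and hence specialize to produce eigenvectors $v_{\tilde q} \in S^n_{\tilde q} V$ for all $\tilde q > 0$, and then passes to a nonzero limit point at $\tilde q = 1$, contradicting the trivial action of $\rho_1(s_{1,n})$ on the classical $S^n V$. This is the point where the transcendence hypothesis — absent from your argument — does real work.
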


\begin{proof}
  We focus on $S_q^nV$, but we will indicate briefly at the end how to modify the proof to handle the antisymmetric case.

  By induction on $n$, we can assume that $S^n_q V$ is fixed pointwise by all $\rho_q(s_{p,t})$ with $t-p < n-1$.
  Indeed, there are two natural embeddings of $J_{n-1}$ into $J_n$, as elements acting on either the first or last $n-1$ tensor factors of $V^{\otimes n}$.
  The condition $t-p < n-1$ means that $s_{p,t}$ acts on at most $n-1$ consecutive tensor factors, and hence it can be regarded as an element of $J_{n-1}$ via one of these two embeddings.

  Since $s_{1,n}$ together with these $s_{p,t}$ generate all of $J_n$ (see \cite{HenKam06}*{Lemma 3 (v)}), it is enough to show that $\rho_q(s_{1,n})$ acts as the identity on $S^n_q V$.
  By \cite{HenKam06}*{Lemma 4}, $s_{1,n}$ normalizes the subgroup of $J_n$ generated by the $s_{i,i+1}$ and so the submodule $S^n_q V$ is invariant under $\rho_q(s_{1,n})$. 
  Since $s_{1,n}$ is involutive, its eigenvalues can be only $\pm 1$, so we must  show that  $-1$ does not occur as an eigenvalue on $S^n_q V$.

  Suppose that $\rho_q(s_{1,n})$ has an eigenvalue $-1$ on  $S^n_q V$. Our plan is to show that $\rho_q(s_{1,n})$ must therefore have $-1$ as an eigenvalue on $S^n_q V$ for \emph{all} transcendental $q$. 
  We will then employ a limiting argument to obtain a contradiction with the continuity in $q$ of the actions of $J_n$.

  By part $\mathrm{(c)}$ of Lemma \ref{lem:matrix-coeffs-are-nice} (which carries over to the $q \in \bbC$ setting because $q$ is transcendental), there is an eigenvector $v_q \in S^n_qV$ for $\rho_q(s_{1,n})$ with eigenvalue $-1$ which is in the $\Qq$-span of the basis $B$ for $V^{\otimes n}$. 
  Then the equality $\rho_q(s_{1,n})v_q = -v_q$ becomes a system of rational equations in $q$. 
  Since $q$ is transcendental, this system of equations must hold identically wherever they are defined, and in particular they hold if we replace $q$ with any positive number $\tilde{q}$. 
  Since the matrix coefficients of $\rho_q(s_{1,n})$ and $\rho_{\tilde{q}}(s_{1,n})$ are obtained from those of Lemma \ref{lem:matrix-coeffs-are-nice} $\mathrm{(c)}$ by replacing $\nu$ with $q$ and $\tilde{q}$, respectively, this implies that $\rho_{\tilde{q}}(s_{1,n}) v_{\tilde{q}} = - v_{\tilde{q}}$, where by $v_{\tilde{q}}$ we mean replacing $q$ with $\tilde{q}$ in the coordinates of $v_q$.

  In order to pass to the limit at $\tilde{q}=1$, we scale the $v_{\tilde{q}}$ so that they are uniformly bounded and uniformly bounded away from zero with respect to an arbitrary but fixed inner product.
  By compactness, there is a nonzero limit point $v_1$ of the $v_{\tilde{q}}$ as $\tilde{q} \to 1$, which will be an eigenvector for $\rho_1(s_{1,n})$ with eigenvalue $-1$ by Proposition \ref{prop:switching-is-continuous} $\mathrm{(c)}$.
  Moreover $v_1 \in S^n V$ also by Proposition \ref{prop:switching-is-continuous} $\mathrm{(c)}$.
  This is a contradiction: $\rho_1(s_{1,n})$ acts trivially on $S^n V$ since $\rho_1$ factors through the symmetric group action.

  The proof for $\ext_q^n V_\o$ is the same, recalling that the minus signs from the definition of $\ext_q^n V$ in \eqref{eq:symvectors} are absorbed into the coboundary structure of $\cS \ointq$.
\end{proof}

\begin{rem}
  \label{rem:cactus-group-fixes-symq}
  Note that the elements $s_{i,i+1}$ do not generate the cactus group $J_n$.
  Nevertheless, Proposition \ref{prop:cactus-group-fixes-symq} says that if an element of $V^{\otimes n}$ is fixed by all $s_{i,i+1}$ then it is fixed by all of $J_n$.
  There is no reason to expect that an arbitrary coboundary structure has this property.
\end{rem}


\subsection{Proof of the theorem}

We first prove commutativity:

\begin{prop}
  \label{prop:commutative-algebra}
  For a transcendental number $q > 0$ and a module $V$ in $\ointqf$, the algebra $S_q(V)$ is commutative in $\ointq$, while $\ext_q(V_\o)$ is commutative in $\cS\ointq$. 
\end{prop}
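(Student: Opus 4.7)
The plan is to reduce the commutativity equation $m_A \circ \gamma_{A, A} = m_A$ to Proposition \ref{prop:cactus-group-fixes-symq}. I would work bigraded piece by bigraded piece, handling $A = S_q(V)$ first; the case $\ext_q(V_\o)$ in $\cS\ointq$ will be formally identical.

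The first step is to use Proposition \ref{prop:embeddings} to identify $S_q^n(V)$ with the submodule $S_q^n V \subseteq V^{\otimes n}$. The proof of that proposition in fact exhibits $V^{\otimes n}$ as the \emph{orthogonal} direct sum $S_q^n V \oplus J^n$ with respect to the $\uqg$-invariant inner product, so the quotient map $V^{\otimes n} \twoheadrightarrow S_q^n(V)$ becomes, under this identification, the orthogonal projection $\pi_n$ onto $S_q^n V$. In particular, via the canonical identification $V^{\otimes n} \otimes V^{\otimes m} = V^{\otimes(n+m)}$, the restriction of $m_A$ to $S_q^n V \otimes S_q^m V$ is the restriction of $\pi_{n+m}$.

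The second step is to identify the coboundary $\gamma_{A, A}$ on this bigraded piece. By naturality of $\gamma$, the restriction of $\gamma_{S_q^n(V), S_q^m(V)}$ is obtained from $\gamma_{V^{\otimes n}, V^{\otimes m}}$ by restricting the source to $S_q^n V \otimes S_q^m V$ and the target to $S_q^m V \otimes S_q^n V$. But $\gamma_{V^{\otimes n}, V^{\otimes m}}$, read as an endomorphism of $V^{\otimes(n+m)}$ using the canonical identifications of source and target, is by definition precisely the operator $\sigma_{1, n, n+m}$ of \eqref{eq:sigma_prq}, i.e.\ an element of the cactus-group action $\rho_q : J_{n+m} \to GL(V^{\otimes(n+m)})$.

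The conclusion is then immediate: by Proposition \ref{prop:cactus-group-fixes-symq}, the subspace $S_q^{n+m} V$ is fixed pointwise by $J_{n+m}$, and because the coboundary operators are unitary (\S\ref{sec:cactus}) the action $\rho_q$ is unitary, so $J^{n+m} = (S_q^{n+m} V)^\perp$ is also $J_{n+m}$-invariant. Thus $\pi_{n+m}$ intertwines $\rho_q$ with itself, and since $\rho_q(g)$ acts trivially on the image of $\pi_{n+m}$ we obtain $\pi_{n+m} \circ \rho_q(g) = \pi_{n+m}$ for every $g \in J_{n+m}$. Applying this with $g = \sigma_{1, n, n+m}$ produces $m_A \circ \gamma_{A, A} = m_A$ on the chosen bigraded piece. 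The super case uses the second assertion of Proposition \ref{prop:cactus-group-fixes-symq} verbatim, working throughout in $\cS\ointq$.

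I expect the only delicate point to be the bookkeeping in the second step: one must confirm that the canonical identifications of $V^{\otimes n} \otimes V^{\otimes m}$ with $V^{\otimes(n+m)}$ really align $\gamma_{V^{\otimes n}, V^{\otimes m}}$ with the cactus-group element $\sigma_{1, n, n+m}$ and not with some other rearrangement. Once that is pinned down, the commutativity collapses into three ingredients already in hand, namely the orthogonality of the symmetric/antisymmetric decomposition (Proposition \ref{prop:embeddings}), the unitarity of the coboundary operators (\S\ref{sec:cactus}), and the pointwise triviality of the $J_{n+m}$-action on $S_q^{n+m} V$ (Proposition \ref{prop:cactus-group-fixes-symq}), which is where the transcendentality hypothesis enters.
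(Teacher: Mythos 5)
Your proposal is correct and follows essentially the same route as the paper's own proof: both reduce commutativity to the observation that, because $V^{\otimes(n+m)}$ decomposes orthogonally as $S_q^{n+m}V \oplus J^{n+m}$ and the coboundary operator is unitary, the multiplication (qua orthogonal projection onto $S_q^{n+m}V$) absorbs the cactus-group action once Proposition~\ref{prop:cactus-group-fixes-symq} guarantees that action is trivial on $S_q^{n+m}V$. The paper phrases the reduction through the quotient map $\pi\colon T^n\twoheadrightarrow S^n_q(V)$ (showing $\pi\circ\gamma_q = \pi$ via $\ker(\pi)^\perp = S^n_qV$ and $\operatorname{ran}(\gamma_q-\mathrm{id})^\perp = \ker(\gamma_q^{-1}-\mathrm{id})$), whereas you phrase it through the submodule embedding of Proposition~\ref{prop:embeddings} and the resulting orthogonal projection $\pi_{n+m}$, but the mathematical content is the same.
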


\begin{proof}
  We will only prove the statement concerning $S_q(V)$.
  Only minimal changes are required in order to adapt the proof to $\ext_q(V_\o)$.
  
  We show that $S_q(V)$ is commutative by lifting everything to the tensor algebra. 
  Let $n \ge 2$ and $1 \le r < n$. 
  We want to show that the lower triangle in the diagram  
  \[
  \begin{tikzpicture}[anchor=base,cross line/.style={preaction={draw=white,-,line width=6pt}}]
    \path (0,0) node (1) {$\scriptstyle T^r\otimes T^{n-r}$} +(6,0) node (2) {$\scriptstyle T^n$} +(2,-1) node (3) {$\scriptstyle T^{n-r}\otimes T^r$} +(0,-4) node (4) {$\scriptstyle S^r\otimes S^{n-r}$} +(6,-4) node (5) {$\scriptstyle S^n$} +(2,-5) node (6) {$\scriptstyle S^{n-r}\otimes S^r$}; 
    \draw[->] (1) -- (2) node[pos=.5,auto] {$\scriptstyle m$};
    \draw[->] (1) -- (3) node[pos=.5,auto,swap] {$\scriptstyle \gamma_q$};
    \draw[->] (3) -- (2) node[pos=.5,auto,swap] {$\scriptstyle m$};
    \draw[->] (4) -- (5) node[pos=.5,auto] {$\scriptstyle m$};
    \draw[->] (4) -- (6) node[pos=.5,auto,swap] {$\scriptstyle \gamma_q$};
    \draw[->] (6) -- (5) node[pos=.5,auto,swap] {$\scriptstyle m$};
    \draw[->] (1) -- (4) node[pos=.5,auto,swap] {$\scriptstyle \pi\otimes\pi$};
    \draw[->] (2) -- (5) node[pos=.5,auto] {$\scriptstyle \pi$};
    \draw[->,cross line] (3) -- (6) node[pos=.5,auto] {$\scriptstyle \pi\otimes\pi$};
  \end{tikzpicture}
  \]
  commutes.
  Here $S^n = S^n_q(V)$ and $T^n = V^{\otimes n}$, while $\pi : T^n \to S^n$ is the quotient map,  $\gamma_q = \rho_q(\sigma_{1,r,n})$ (see~\S \ref{sec:cbdry_intro} and Convention \ref{conv:dropping-upper-C}), and $m$ denotes multiplication.

  Since all of the squares in the diagram commute and $\pi \otimes \pi$ is surjective, a diagram chase shows that it suffices to prove that 
  \begin{equation*}
    \pi \circ m \circ \gamma_q = \pi \circ m : T^r \otimes T^{n-r} \to S^n.
  \end{equation*}
  The two upper $m$ arrows are the canonical identifications of $T^r \otimes T^{n-r}$ and $T^{n-r} \otimes T^r$ with $T^n$, so we can regard $\gamma_q$ as an operator on $T^n$.
  With this identification in mind, we need to prove that $\pi \circ \gamma_q = \pi$, or in other words that $\ran (\gamma_q - \id) \subseteq \ker (\pi)$.
  Equivalently, we will show that $\ker(\pi)^\perp \subseteq \ran (\gamma_q - \id)^\perp$.
  Equation \eqref{eq:emb1} says that $\ker(\pi)^\perp = S^n_q V$ (note that $\rho_q(s_{i,i+1}) = \sigma_i$ in the notation of \eqref{eq:emb1}).
  On the other hand, since $\gamma_q$ is unitary, we have
  \[ \ran(\gamma_q - \id)^\perp = \ker(\gamma_q^* - \id) = \ker(\gamma_q^{-1} - \id),    \]
  so we need to show that $\gamma_q$ acts as the identity on $S^n_q V$.
  This follows from Proposition \ref{prop:cactus-group-fixes-symq}.

  To prove that $\ext_q(V_\o)$ is commutative in $\cS\ointq$, merely replace $\gamma$ with its super-analogue as in \S \ref{sec:super-reps} and $S^n_q(V)$ with $\ext^n_q(V_\o)$.
\end{proof}

Finally, everything we need for the proof of the main result of this section is in place.

\begin{proof}[Proof of Theorem \ref{thm:enveloping_comm_alg}]
  As above, we will only deal with the statement concerning $S_q(V)$, the case of $\ext_q(V_\o)$ being analogous.

  We know from Proposition \ref{prop:commutative-algebra} that $S_q(V)$ is commutative, so it remains only to verify the universality property. 
  Suppose that $A$ is a commutative algebra in $\ointq$ and $f : V \to A$ is a module map. By the universal property of the tensor algebra, $f$ lifts uniquely to a morphism of algebras $\tilde{f} : T(V) \to A$. 
  We only need to show that $\tilde{f}$ factors as 
  \[   T(V) \overset{\pi}{\longrightarrow} S_q(V)  \overset{\hat{f}}{\longrightarrow}  A,   \]
  or in other words that $\langle \Lambda_q^2 V \rangle \subseteq \ker(\tilde{f})$. 
  The uniqueness of $\hat{f}$ will then follow from uniqueness of $\tilde{f}$ and surjectivity of the quotient map $T(V) \to S_q(V)$.

  We now show that $\Lambda^2_q V \subseteq \ker(\tilde{f})$. 
  On the degree two component of $T(V)$, $\tilde{f}$ is defined by $\tilde{f}(v \otimes w) = m(f(v) \otimes f(w))$, where $m$ is the multiplication map of $A$. 
  By the functoriality of the coboundary operators, $(f \otimes f) (\Lambda^2_qV )\subseteq \Lambda^2_q A$. 
  Hence it suffices to show that $m$ vanishes on $\Lambda^2_q A$. 
  But this is immediate from the definition of commutativity.
\end{proof}


\section{The Grothendieck group of $\uqg$-reps}
\label{sec:groth}

For $q > 0$, let $\ointqf$ denote the full subcategory of $\ointq$ consisting of finite-dimensional modules.
The categories $\ointqf$ are all semisimple (every module is a finite direct sum of simple modules), their simple objects are all indexed by $\cP^+$, and their fusion rules are the same.
Hence, this identification of simple objects induces a canonical isomorphism of the Grothendieck semirings of these categories; we denote this common semiring by $K^+$, and the corresponding Grothendieck ring by $K$.
We denote multiplication in $K$ by $\cdot$ or simply by juxtaposition when appropriate.
Defining $K^+$ to be the positive cone determines a partial order $\le$ on $K$.

We will often abuse notation by denoting a $\uqg$-representation and the corresponding element of $K$ by the same symbol. 
For $\lambda\in\cP^+$, the notion of multiplicity of $V_\lambda$ in a $\uqg$-module extends in the evident manner to a notion of multiplicity of $V_\lambda$ in an element of $K^+$ or $K$.
Note that the multiplicity of $V_\lambda$ in an element of $K$ can be negative.

For any non-negative integer $n$, the endofunctors $V \mapsto S_q^n V$ and $V \mapsto \ext_q^n V$ of the categories $\ointqf$ descend to functions $S_q^n, \ext_q^n : K^+ \to K^+$.  
Note that $S_1^n$ and $\ext_1^n$ are induced by the usual symmetric and exterior powers of $\fg$-modules, and we denote these just by $S^n$ and $\ext^n$, respectively.

Our goal in this section is to prove the following:

\begin{thm}
  \label{thm:symext}
  The identity
  \begin{equation}
    S_q^3 V -\ext_q^3 V = S^3 V - \ext^3 V
  \end{equation}	
  holds in $K$ for any $V\in K^+$ and any transcendental $q>0$.
\end{thm}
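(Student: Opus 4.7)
The plan is to exploit the action of the cactus group $J_3$ on $V^{\otimes 3}$. The first observation is that $J_3$ is the infinite dihedral group: writing $a=s_{1,2}$, $b=s_{2,3}$, $c=s_{1,3}$, Definition~\ref{def:cactus_group} imposes $a^2=b^2=c^2=1$ together with $cac=b$ and $cbc=a$; eliminating $b$ via $b=cac$ makes both $b^2=1$ (as $(cac)^2=ca^2c=c^2=1$) and $cbc=a$ automatic, so $J_3=\langle a,c\mid a^2=c^2=1\rangle\cong D_\infty$. Since $\sigma_1=\rho_q(a)$, $\sigma_2=\rho_q(b)$, $\gamma=\rho_q(c)$ are self-adjoint unitary operators commuting with $\uqg$, they induce self-adjoint unitary involutions $\tilde\sigma_1,\tilde\sigma_2,\tilde\gamma$ on each multiplicity space $M_\lambda(q):=\Hom_{\uqg}(V_\lambda,V^{\otimes 3})$. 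By unitarity, $M_\lambda(q)$ decomposes into irreducible $D_\infty$-subrepresentations: one-dimensional types indexed by signs $(\epsilon_a,\epsilon_c)\in\{\pm 1\}^2$ (in which $b\mapsto\epsilon_a$ automatically), and two-dimensional types in which $\tilde\sigma_1$ and $\tilde\gamma$ both act as traceless reflections.

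Next I would identify which isotypes account for the multiplicities of $V_\lambda$ in $S^3_q V$ and $\Lambda^3_q V$. Proposition~\ref{prop:cactus-group-fixes-symq} shows $\gamma=\mathrm{id}$ on $S^3_q V$, so the $(+,+)$-isotype of $M_\lambda(q)$ carries the multiplicity of $V_\lambda$ in $S^3_q V$. Applying the same proposition in $\cS\ointq$ to $S^3_q V_\o=\Lambda^3_q V$, together with the identity $s_{1,3}^{\cS}=-s_{1,3}$ (which follows from $\sigma^{\cS}_{V_\o,V_\o}=-\sigma$ via the formula $s_{1,3}=\sigma_{1,1,3}\circ s_{2,3}$, noting that $V_\o\otimes V_\o$ has super-parity zero so no sign twist enters $\sigma_{1,1,3}^{\cS}$), gives $\gamma=-\mathrm{id}$ on $\Lambda^3_q V$; hence the $(-,-)$-isotype of $M_\lambda(q)$ carries the multiplicity of $V_\lambda$ in $\Lambda^3_q V$. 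A vector in the $(+,-)$-isotype would satisfy $\sigma_1 v=v$, $\gamma v=-v$, hence $\sigma_2 v=cacv=v$, placing $v\in S^3_q V$ where $\gamma v=v$ by the above, a contradiction; the $(-,+)$-isotype vanishes similarly. Since $\tilde\gamma$ is traceless on the two-dimensional isotypes, the multiplicity of $V_\lambda$ in $S^3_q V-\Lambda^3_q V$ equals $\#(+,+)-\#(-,-)=\mathrm{tr}(\gamma|_{M_\lambda(q)})$.

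It remains to see this trace is independent of $q$. The operator $\gamma=s_{1,3}^q$ depends continuously on $q$ (Proposition~\ref{prop:switching-is-continuous}) with matrix coefficients rational in $q$ on the fixed basis of $V^{\otimes 3}$ from Lemma~\ref{lem:matrix-coeffs-are-nice}. The projector $P_\lambda^q$ onto the $V_\lambda$-isotype of $V^{\otimes 3}$ can be written as a polynomial in the Casimir of $\uqg$ with $q$-rational coefficients, well-defined whenever the Casimir eigenvalues on the (finitely many, $q$-independent) weights occurring in $V^{\otimes 3}$ remain distinct, which holds for $q$ transcendental and in an open neighbourhood of $q=1$. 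Thus $q\mapsto\mathrm{tr}(P_\lambda^q\gamma)/\dim V_\lambda=\mathrm{tr}(\gamma|_{M_\lambda(q)})$ is a rational function of $q$ taking integer values on its domain; being locally constant near $q=1$, it equals the classical multiplicity of $V_\lambda$ in $S^3 V-\Lambda^3 V$ (where $\gamma$ specializes to the transposition $(1\,3)\in S_3$), and by rigidity of rational functions this equality persists at every transcendental $q$. The main difficulty I anticipate is this final continuity step, specifically in controlling the finitely many algebraic values of $q$ at which the projector formula could fail; the rational-function rigidity argument is designed to sidestep them.
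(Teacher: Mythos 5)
Your strategy is genuinely different from the paper's. Where the paper first reduces the problem (via Lemma~\ref{lem:squares_are_classical}) to showing $S^3_qV-\ext^3_qV=(S^2_qV-\ext^2_qV)V$ and then controls the $(-1)$-eigenspace of the element $\psi=s_{1,3}s_{1,2}$, you go directly after the trace of $\gamma=s_{1,3}$ on the multiplicity space $M_\lambda(q)$. Your identification of $J_3$ with the infinite dihedral group, the classification of its unitary irreducibles, the use of Proposition~\ref{prop:cactus-group-fixes-symq} in both $\ointq$ and $\cS\ointq$ to pin down the sign types of $S_q^3V$ and $\ext_q^3V$, the computation $s_{1,3}^{\cS}=-s_{1,3}$ from the recursion \eqref{eq:s_pq}, and the conclusion that the $(+,-)$ and $(-,+)$ isotypes vanish are all correct and lead cleanly to the formula $[\text{mult.\ of }V_\lambda\text{ in }S_q^3V-\ext_q^3V]=\mathrm{tr}(\gamma|_{M_\lambda(q)})$. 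At $q=1$, $\gamma_1$ is the transposition $(1\,3)$ by Proposition~\ref{prop:switching-is-continuous}(c) and \eqref{eq:cactus_quotient}, so the $q=1$ trace computes the classical multiplicity; this avoids the detour through quadratic components entirely, which is an attractive feature.

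The gap is in the $q$-independence step. Your plan is to realize $P_\lambda^q$ as a polynomial in the quantum Casimir and then invoke rational-function rigidity, but this requires the Casimir eigenvalues $q^{\pm(\mu\mid\mu+2\rho)}$ to distinguish the isotypes of $V^{\otimes 3}$, and this fails for essentially every $q$ (including $q=1$ and all transcendental $q$) whenever two dominant weights $\lambda\neq\mu$ occurring in $V^{\otimes 3}$ satisfy $(\lambda\mid\lambda+2\rho)=(\mu\mid\mu+2\rho)$. This happens even for simple $\fg$: for $\fg=\fs\fl_3$, the weights $3\omega_1$ and $3\omega_2$ are distinct, lie in the root lattice, satisfy $(3\omega_1\mid 3\omega_1+2\rho)=(3\omega_2\mid 3\omega_2+2\rho)=12$, and both occur already in $V_{\omega_1+\omega_2}^{\otimes 2}$, hence in $V_{\omega_1+\omega_2}^{\otimes 3}$. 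So the polynomial-in-Casimir formula for $P_\lambda^q$ simply does not exist here. The fix does not require the Casimir at all: Lemma~\ref{lem:cts-map-to-grassmannian} already gives continuity of $q\mapsto(V^{\otimes 3})_q^\lambda$, from which $q\mapsto P_\lambda^q$ is continuous on $(0,\infty)$; combined with Proposition~\ref{prop:switching-is-continuous}, $q\mapsto\mathrm{tr}(P_\lambda^q\gamma_q)=(\dim V_\lambda)\cdot\mathrm{tr}(\gamma|_{M_\lambda(q)})$ is a continuous function valued in $(\dim V_\lambda)\bbZ$, hence constant. This is exactly the argument pattern of Lemma~\ref{lem:squares_are_classical}, and it closes your gap without rationality considerations. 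With this substitution your proof is complete and arguably shorter than the paper's.
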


As with Theorem \ref{thm:enveloping_comm_alg}, we expect this result to hold for all positive $q$.

\begin{rem}
  \label{rem:grothendieck-semigroups}
  As an abelian semigroup, $K^+$ is freely generated by (the classes of) the modules $V_\lambda$ for $\lambda \in \cP^+$.  
  It follows that $K^+$ injects into $K$.
  Hence the statement of Theorem \ref{thm:symext} can be lifted to the equality
  \[      S_q^3 V + \ext^3 V = S^3 V + \ext_q^3 V   \] 
  in $K^+$.
\end{rem}

The motivation for Theorem \ref{thm:symext} comes from \cite{BerZwi08}*{Conjecture 2.26}, which asserts a ``numerical Koszul duality'' between quantum symmetric and exterior algebras.
Recall that the \emph{Hilbert series} $h(A,t)$ of a locally finite-dimensional graded algebra $A$ is the generating function for the dimensions of the graded components of $A$ \cite{PolPos05}*{\S 2.2}.
If $A$ is a Koszul algebra with quadratic dual algebra $A^!$, then we have
\begin{equation}
  \label{eq:hilbert-series}
  h(A,t) \cdot h(A^!,-t) = 1;
\end{equation}
see \cite{PolPos05}*{Corollary 2.2}.
The quadratic dual of $S_q(V_\lambda)$ is $\ext_q(V_\lambda^*)$ \cite{BerZwi08}*{Proposition 2.11}.
These algebras are Koszul if $V_\lambda$ is flat \cite{BerZwi08}*{Proposition 2.28}, but there is no reason to expect that \eqref{eq:hilbert-series} holds in general for $A = S_q(V_\lambda)$.
However, the conjecture states that it does hold up to third order, i.e.
\[ h(S_q(V_\lambda),t) \cdot h(\ext_q(V_\lambda^*), -t) = 1 + O(t^4). \]
This is equivalent to the equality
\begin{equation}
  \label{eq:numerical-Koszul-duality-equiv}
  \dim S_q^3 V_\lambda - \dim\ext_q^3 V_\lambda^* = (\dim V_\lambda)^2.
\end{equation}
Since $\ext_q(V_\lambda)$ has the same Hilbert series as $\ext_q(V_\lambda^*)$ \cite{BerZwi08}*{Proposition 2.11}, Theorem \ref{thm:symext} confirms \eqref{eq:numerical-Koszul-duality-equiv}, and hence also the conjecture.
(The parameter $q$ is taken to be a formal variable in \cite{BerZwi08}, which is equivalent to taking $q \in \bbC^\times$ transcendental.)
In fact, it proves a slightly stronger version, accounting for \emph{all} finite-dimensional modules, as opposed to just simple ones.
This result means that the quantum symmetric and exterior algebras exhibit the same amount of collapsing in their degree three components.

Additionally, Theorem \ref{thm:symext} provides positive evidence for \cite{Zwi}*{Conjecture 7.3}. 
For a module $V \in \ointqf$, \cite{Zwi}*{Definition 6.18} constructs elements $S_\l^3 V$ and $\ext_\l^3 V$ of $K^+$ by removing from $S^2 V \otimes V$ and $\ext^2 V \otimes V$, respectively, their greatest common submodule.
This description is equivalent to the definition of $S^3_\l V$ and $\ext^3_\l V$ given in \cite{Zwi}.
More precisely, if we denote by $W$ the infimum of $S^2V \cdot V$ and $\ext^2 V \cdot V$ in the lattice $(K, \le)$, then
\[   S_\l^3 V  = S^2 V \cdot V - W, \quad \ext_\l^3 V = \ext^2V \cdot V - W.  \]

The proof of \cite{BerZwi08}*{Lemma 2.30} shows that the inequalities $S_\l^3 V \le S_q^3 V$ and $\ext_\l^3 V \le \ext_q^3 V$ hold in $K$ (their result is stated for simple $V$, but the proof works in general). 
Finally, \cite{Zwi}*{Conjecture 7.3} states, essentially, that these inequalities are in fact equalities when $V$ is simple and $q$ is transcendental.
This means that $S^3_q V$ and $\ext^3_q V$ display the maximal amount of collapsing when $q$ is transcendental.
By construction, $S_\l^3 V - \ext_\l^3 V = (S^2 V - \ext^2 V)V$, and we will see below that $(S^2 V - \ext^2 V)V = S^3 V - \ext^3 V $, so Theorem \ref{thm:symext} is positive evidence for the conjecture.   

For an arbitrary element $V\in K^+$, we also denote by $V$ a lift of this element to a $\uqg$-module for some $q>0$. 
Moreover, as in Convention \ref{conv:dropping-upper-C}, we will assume that all algebras $\uqg$ and all of the various braid or coboundary operators for $q>0$ act simultaneously on $V$ and its tensor powers. 

The strategy for proving Theorem \ref{thm:symext} consists of showing that when $q>0$ is either transcendental or $1$ the identity \[ S_q^3V-\ext_q^3V=(S_q^2V-\ext_q^2 V)V \] holds in $K$, and also that the right hand expression does not depend on $q$. 
In order to prove the latter statement, we require the following result:

\begin{lem}
  \label{lem:cts-map-to-grassmannian}
  Let $V_1, \dots, V_n \in K^+$ and $\lambda \in \cP^+$.  
  Regard $V = V_1 \otimes \dots \otimes V_n$ as a complex vector space with actions of $\uqg$ for all $q > 0$ as in Convention \ref{conv:dropping-upper-C}.
  Denote by $V_q^\lambda \subseteq V$ the space of highest weight vectors of weight $\lambda$ for the action of $\uqg$.
  Then $q \mapsto V_q^\lambda$ is a continuous map into the Grassmann manifold $\Gr(V)$ of $V$.
\end{lem}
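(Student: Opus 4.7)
The plan is to realize $V_q^\lambda$ as the kernel of a continuous family of linear maps on a $q$-independent subspace, and then invoke the fact that kernels of a continuous family of operators of constant rank vary continuously in the Grassmannian.

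First I would observe that under Convention \ref{conv:dropping-upper-C}, the weight decomposition of $V = V_1 \otimes \cdots \otimes V_n$ is itself independent of $q$. Indeed, each global crystal basis vector $b \in B_\mu$ is a weight vector of weight $\mu$ under every $\uqg$, since $K_i b = q_i^{\langle \wt(b), \alpha_i^\vee \rangle} b$ is obtained by specializing $\nu \mapsto q$ in the analogous identity over $\uvq$. Because weights add on tensor products, the weight-$\lambda$ subspace $V_\lambda \subseteq V$ is a single finite-dimensional subspace shared by all the $\uqg$-actions.

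With this in hand, the highest-weight condition reads
\[
V_q^\lambda = \ker T_q, \qquad T_q \colon V_\lambda \to \bigoplus_{i=1}^r V_{\lambda + \alpha_i}, \quad T_q(v) = (E_1 v, \ldots, E_r v),
\]
and Lemma \ref{lem:continuity-of-uqg-actions} gives that $q \mapsto T_q$ is continuous on $(0,\infty)$. Furthermore $\dim V_q^\lambda$ is constant in $q$: it equals the multiplicity of $V_\lambda$ in $V$ as a $\uqg$-module, which is the same for every $q > 0$ because $\ointqf$ is semisimple with a Grothendieck semiring independent of $q$, as noted at the opening of Section \ref{sec:groth}.

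Finally I would invoke the standard fact that a continuous family of linear maps of constant rank between fixed finite-dimensional vector spaces induces a continuous map into the Grassmannian via $T \mapsto \ker T$. One clean realization: on the constant-rank locus the Moore--Penrose pseudoinverse $T_q^+$ is continuous in $T_q$, so the orthogonal projection $\id - T_q^+ T_q$ onto $\ker T_q$ is continuous, which is exactly continuity into $\Gr(V_\lambda) \subseteq \Gr(V)$. The only conceptual point worth flagging is the first step: identifying the weight decomposition as $q$-independent is what allows us to target a single Grassmannian rather than a moving family of them. After that, the constancy of dimension and the continuity of kernels are each immediate, so there is no substantive obstacle.
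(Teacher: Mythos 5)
Your argument is correct but takes a genuinely different, more constructive route than the paper's. The paper establishes constancy of $\dim V_q^\lambda$ exactly as you do, but then proves continuity by contradiction via compactness of the Grassmannian: if $q_n \to q_0$ yet $V_{q_n}^\lambda$ stays outside some neighborhood of $V_{q_0}^\lambda$, one passes to a convergent subsequence with limit $W$; by Lemma \ref{lem:continuity-of-uqg-actions}, $W$ consists of highest weight vectors of weight $\lambda$ for $U_{q_0}(\fg)$, so $W\subseteq V_{q_0}^\lambda$, and equality of dimensions forces $W=V_{q_0}^\lambda$, a contradiction. You instead realize $V_q^\lambda$ explicitly as $\ker T_q$ for a continuous constant-rank family $T_q$ on the ($q$-independent) weight-$\lambda$ subspace and invoke the constant-rank/Moore--Penrose continuity of kernels. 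Both proofs rest on the same two pillars---continuity of the $E_i$-action and $q$-independence of the multiplicity---but yours gives an explicit local parametrization of the map into $\Gr(V)$, whereas the paper's compactness argument sidesteps both the need to isolate a $q$-independent weight space and the appeal to the constant-rank lemma. One minor slip: a vector $b\in B_\mu$ has weight $\wt(b)$, not $\mu$ in general (only the highest weight vector of $V_\mu$ has weight $\mu$); since you correctly use $\wt(b)$ in the $K_i$-eigenvalue formula, nothing breaks, but the phrase ``weight vector of weight $\mu$'' should read ``weight vector of weight $\wt(b)$''.
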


\begin{proof}
  Since tensor products decompose with the same weight multiplicities as they do classically, the dimension of $V_q^\lambda$ is independent of $q$.
  Thus the range of $q \mapsto V_q^\lambda$ is contained in a single component of the Grassmannian.

  Let $(q_n)$ be a sequence of positive real numbers converging to some $q_0 > 0$.
  We want to show that $(V_{q_n}^\lambda)$ converges to $V_{q_0}^\lambda$ in $\Gr(V)$.
  If not, there is an open neighborhood $\cU$ of $V_{q_0}^\lambda$ such that some subsequence of $(V_{q_n}^\lambda)$ avoids $\cU$.
  Passing to this subsequence, we may assume that $V_{q_n}^\lambda \notin \cU$ for all $n$.

  By compactness of $\Gr(V)$, a subsequence of $V_{q_n}^\lambda$ converges to some point $W \notin \cU$.
  It follows from Lemma \ref{lem:continuity-of-uqg-actions} that $W$ consists of highest weight vectors of weight $\lambda$ for the action of $U_{q_0}(\fg)$.
  Hence $W \subseteq V_{q_0}^\lambda$ by definition, so they must be equal since their dimensions coincide.
  But this contradicts $W \notin \cU$.
\end{proof}

\begin{lem}
  \label{lem:squares_are_classical}
  For any $V\in K^+$, the elements $S_q^2V$ and $\ext_q^2V$ of $K^+$ do not depend on $q>0$.  	
\end{lem}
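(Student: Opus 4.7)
The plan is to show, for each $\lambda \in \cP^+$, that the multiplicity of $V_\lambda$ in $S_q^2 V$ (and in $\ext_q^2 V$) is independent of $q > 0$; summing these multiplicities against $V_\lambda$ gives the claim, and for $q = 1$ one recovers the classical $S^2 V$ and $\ext^2 V$ (since $\bar{R}_1 = \id$ by Proposition \ref{prop:switching-is-continuous}, so $\sigma_{V,V}$ specializes to the tensor flip).

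First, invoking Convention \ref{conv:dropping-upper-C}, let all $\uqg$ act on the same complex vector space $V \otimes V$. Since $\sigma_q := \sigma_{V,V}$ is a self-adjoint involution (see \S\ref{sec:cactus}), $V \otimes V = S_q^2 V \oplus \ext_q^2 V$ as $\uqg$-submodules, and the orthogonal projection onto $S_q^2 V$ is $P_q := \tfrac{1}{2}(\id + \sigma_q)$. By Proposition \ref{prop:switching-is-continuous}(b), the family $P_q$ is norm-continuous in $q \in (0,\infty)$. Next, apply Lemma \ref{lem:cts-map-to-grassmannian} to the module $V \otimes V$: the subspace $W_q^\lambda$ of highest weight vectors of weight $\lambda$ varies continuously in the Grassmannian, so the associated orthogonal projection $\Pi_q^\lambda$ onto $W_q^\lambda$ depends continuously on $q$.

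Now I would check the key commutation $P_q \Pi_q^\lambda = \Pi_q^\lambda P_q$. Because $P_q$ is a $\uqg$-module map, it carries $W_q^\lambda$ (vectors killed by all $E_i$ and of weight $\lambda$) into itself; being self-adjoint, $P_q$ then also preserves the orthogonal complement $(W_q^\lambda)^\perp$. Hence $P_q$ commutes with $\Pi_q^\lambda$, and the product $P_q \Pi_q^\lambda$ is an orthogonal projection onto $S_q^2 V \cap W_q^\lambda$. The dimension of this intersection is exactly the multiplicity of $V_\lambda$ in $S_q^2 V$, since any highest weight vector of weight $\lambda$ in $S_q^2 V$ lies in $W_q^\lambda$, and conversely $W_q^\lambda$ splits along the $\pm 1$ eigenspaces of $\sigma_q$.

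Finally, the multiplicity equals $\operatorname{rank}(P_q \Pi_q^\lambda) = \operatorname{tr}(P_q \Pi_q^\lambda)$. This is a continuous, integer-valued function of $q$ on the connected interval $(0,\infty)$, hence constant. The same argument applied to $\id - P_q$ handles $\ext_q^2 V$. I do not foresee a serious obstacle; the only mildly delicate point is verifying that $P_q$ and $\Pi_q^\lambda$ commute, which rests on $P_q$ being simultaneously a module map and self-adjoint.
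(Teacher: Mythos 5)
Your proof is essentially the same as the paper's: both use Convention \ref{conv:dropping-upper-C} to put all $\uqg$-actions on a single vector space, both invoke Lemma \ref{lem:cts-map-to-grassmannian} to get a continuous family of projections onto the $\lambda$-highest-weight space, both form the product of $\tfrac12(\id\pm\sigma_q)$ with that projection, and both conclude via the constancy of the trace/rank of a continuous family of idempotents. One small caveat, present in both your write-up and the paper's: $\sigma_q$ is self-adjoint only with respect to the $q$-dependent invariant inner product, so if $\Pi_q^\lambda$ is taken orthogonal with respect to a fixed inner product (as is needed to deduce its continuity from Lemma \ref{lem:cts-map-to-grassmannian}), then $P_q$ need not literally commute with $\Pi_q^\lambda$; what survives and suffices is that $P_q$ preserves $W_q^\lambda$, whence $P_q\Pi_q^\lambda$ is still an idempotent whose rank is the desired multiplicity.
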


\begin{proof}
  As in Convention \ref{conv:dropping-upper-C}, we consider $V$ to be a complex vector spaces on which all the algebras $\uqg$ act.
  For fixed $\lambda \in \cP^+$, we will show that the multiplicity of $V_\lambda$ in $S^2_q V$ is independent of $q$.

  For each $q>0$, let $\sigma_q = \rho_q(s_{1,2})$ be the coboundary operator for the action of $\uqg$.
  Fix an arbitrary inner product $(\cdot,\cdot)$ on $V\otimes V$, and for each $q$ let $P_q^\lambda$ be the orthogonal projection with respect to $(\cdot,\cdot)$ on the space $(V \otimes V)_q^\lambda$ of highest weight vectors of weight $\lambda$ in $V \otimes V$.
  By Lemma \ref{lem:cts-map-to-grassmannian} applied to $V \otimes V$, $q \mapsto (V \otimes V)_q^\lambda$ is a continuous map to $\Gr(V \otimes V)$, and hence $q \mapsto P_q^\lambda$ is a continuous family of operators.
  
  Note that $\frac{1 + \sigma_q}{2}$ is the projection of $V \otimes V$ onto $S^2_q V$.
  The coboundary operator $\sigma_q$ preserves $(V \otimes V)^\lambda_q$ since it is a $\uqg$-module map, so $\sigma_q$ commutes with $P_q^\lambda$, and hence $\frac{1+\sigma_q}2  P_q^\lambda$ is an idempotent which projects onto the space of highest weight vectors of weight $\lambda$ in $S_q^2 V$.
  The multiplicity of $V_\lambda$ in $S_q^2V$ with respect to the $\uqg$-action is, then, the trace of $\frac{1+\sigma_q}2  P_q^\lambda$.

  Since the trace is constant on a continuously varying family of idempotents, this means that the multiplicity of $V_\lambda$ in $S^2_q V$ is independent of $q$.
  Since this is true for any $\lambda$, the result follows.
  The same argument works for $\ext_q^2 V$ after replacing $\frac{1+\sigma_q}2$ by $\frac{1-\sigma_q}2$.    
\end{proof}

\begin{rem}
  \label{rem:collapse}
  It is a consequence of \cite{BerZwi08}*{Theorem 2.21} that for transcendental $q$, the inequalities $S_q^nV \le S^nV$ and $\ext_q^nV \le \ext^nV$ hold for any $V \in K^+$. 
  In particular, when $n=2$, a dimension count shows that these are actually equalities. 
  This provides a simple proof for Lemma \ref{lem:squares_are_classical} for the case when $q$ is either transcendental or $1$.
\end{rem}

We now make some elementary observations, needed below, on the 3-fruit cactus group $J_3$. 
It is a simple consequence of the definition of $J_n$ from \S \ref{sec:cbdry_intro} (using the notation introduced there, and suppressing commas between indices) that $J_3$ is generated by the three elements $a=s_{12}$, $b=s_{23}$, and $\psi = s_{13}s_{12}$; of course, we could have used $s_{13}$ as the third element, but working with $\psi$ will be more convenient. 
For $q>0$ and $V\in\ointq$, the action of $J_3$ on $V^{\otimes 3}$ coming from the usual coboundary structure on $\ointq$ (see \ref{sec:cactus}) is 
\[  a \mapsto \sigma_{V,V} \otimes \id_V, \quad b \mapsto \id_V \otimes \sigma_{V,V}, \quad \psi \mapsto \sigma_{V\otimes V,V}.
\]

\begin{lem}
  \label{lem:rels_for_J3}
  The following relations hold in the cactus group $J_3$:
  \begin{enumerate}[(a)]
  \item $a^2 = b^2 = 1$;
  \item $\psi a=b\psi$;
  \item $a\psi a=\psi^{-1}$.
  \end{enumerate}
\end{lem}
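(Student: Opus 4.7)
The plan is to derive all three identities directly from the three defining relations of $J_n$ recorded in Definition \ref{def:cactus_group}, with the only nontrivial observation being that a single instance of relation (c) gives $\psi$ a useful alternative expression.

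Part (a) will be immediate: it is just the involution relation $s_{p,t}^2 = 1$ applied to $(p,t) = (1,2)$ and $(p,t) = (2,3)$.

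The main computational input for (b) and (c) is the following consequence of Definition \ref{def:cactus_group}(c): apply that relation with $(p,t) = (1,3)$ and $(k,l) = (1,2)$, which is allowed since $1 \le 1 < 2 \le 3$; solving the constraints $k+j = l+i = p+t = 4$ forces $(i,j) = (2,3)$, so that
\[ s_{13}\,s_{12} = s_{23}\,s_{13} \]
holds in $J_3$. In other words, $\psi$ admits the two presentations $\psi = s_{13}s_{12} = s_{23}s_{13}$. For (b), combining the first presentation with $a^2 = 1$ gives $\psi a = s_{13}s_{12}s_{12} = s_{13}$, while combining the second with $b^2 = 1$ gives $b\psi = s_{23}s_{23}s_{13} = s_{13}$; hence $\psi a = b\psi$. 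For (c), I would compute
\[ a\psi a = s_{12}(s_{13}s_{12})s_{12} = s_{12}s_{13} \]
using $s_{12}^2 = 1$, and observe that $\psi^{-1} = (s_{13}s_{12})^{-1} = s_{12}s_{13}$ by the same involution relations, so $a\psi a = \psi^{-1}$.

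No genuine obstacle is anticipated: the lemma is bookkeeping in the abstract group $J_3$, and the only step that requires any thought is spotting the correct instance of Definition \ref{def:cactus_group}(c) to obtain the alternative form of $\psi$ used above.
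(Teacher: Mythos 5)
Your proof is correct and follows essentially the same route as the paper's: both hinge on the involutivity of the generators together with the single cactus relation $s_{13}s_{12} = s_{23}s_{13}$. You are slightly more explicit than the paper in pinning down exactly which instance of Definition \ref{def:cactus_group}(c) yields that relation, and you verify (c) by direct computation of both sides rather than via the observation that $\psi a = s_{13}$ is its own inverse, but these are cosmetic differences.
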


\begin{rem}
  \label{rem:generators-of-J3}
  It follows immediately from part (b) that $J_3$ is generated by $a$ and $\psi$.
\end{rem}

\begin{proof}
  (a) follows from involutivity of the generators $s_{p,t}$ of the cactus group.
  Using $\psi a = s_{13}$, (b) reads $s_{13}=s_{23}s_{13}s_{12}$; this follows from the relation $s_{13}s_{12} = s_{23}s_{13}$.
  Since $\psi a=s_{13}$ is involutive, it must be equal to its inverse, $a\psi^{-1}$. 
  Multiplying by $a$ on the left and using $a^2=1$, we get (c).
\end{proof}

We will be working with the families $a_q=\rho_q(a)$, $b_q=\rho_q(b)$ and $\psi_q=\rho_q(\psi)$ of operators in $GL(V^{\otimes 3})$, indexed by $q>0$, where $\rho_q$ is as defined in Convention \ref{conv:dropping-upper-C}.

\begin{lem}
  \label{lem:alternative_descr_symext}
  Let $q>0$ be either transcendental or $1$, and $V\in\ointqf$. 
  Then the quantum symmetric and exterior cubes of $V$ can be described as follows:
  \begin{equation}
    \label{eq:sym3}
    S_q^3V=\left\{v\in V^{\otimes 3} \mid a_qv=v,\ \psi_q v=v\right\};
  \end{equation}
  \begin{equation}
    \label{eq:ext3}
    \ext_q^3V = \left \{v\in V^{\otimes 3} \mid a_qv=-v,\ \psi_q v=v \right\}.
  \end{equation}
\end{lem}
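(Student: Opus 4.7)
The plan is to prove each of \eqref{eq:sym3} and \eqref{eq:ext3} via two inclusions, using Proposition \ref{prop:cactus-group-fixes-symq} for the forward direction and the cactus-group relation from Lemma \ref{lem:rels_for_J3} for the reverse direction. Recall that by the definitions in \eqref{eq:symvectors}, $v\in S_q^3V$ iff $a_qv=v$ and $b_qv=v$, while $v\in \ext_q^3V$ iff $a_qv=-v$ and $b_qv=-v$; in each case the task is to replace the condition on $b_q$ by $\psi_qv=v$ while keeping the condition on $a_q$ unchanged.

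For \eqref{eq:sym3}, the forward inclusion is immediate: since $\psi\in J_3$ and Proposition \ref{prop:cactus-group-fixes-symq} says that $J_3$ acts trivially on $S_q^3V$, we get $\psi_qv=v$ for every $v\in S_q^3V$. For the reverse inclusion, the relation $\psi a\psi^{-1}=b$ in $J_3$ (Lemma \ref{lem:rels_for_J3}(b)) applied through $\rho_q$ gives $b_q=\psi_qa_q\psi_q^{-1}$, and if $a_qv=v$ and $\psi_qv=v$, then
\[
b_qv=\psi_qa_q\psi_q^{-1}v=\psi_qa_qv=\psi_qv=v,
\]
placing $v$ in $S_q^3V$.

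For \eqref{eq:ext3}, the identifications of \S \ref{sec:super-reps} realize $\ext_q^3V\subseteq V^{\otimes 3}$ as the quantum symmetric cube $\ext_q^3V_\o$ of the odd incarnation $V_\o\in\cS\ointq$. Proposition \ref{prop:cactus-group-fixes-symq}, applied inside the coboundary category $\cS\ointq$, asserts that $J_3$ fixes $\ext_q^3V_\o$ pointwise through the super-coboundary operators. By the sign rule of \S \ref{sec:super-reps}, the super-coboundary on $V_\o\otimes V_\o$ carries a sign $(-1)^{1\cdot 1}=-1$, so $a$ and $b$ act as $-a_q$ and $-b_q$; however, $V_\o\otimes V_\o$ has parity $1+1\equiv 0\bmod 2$, so $\gamma_{V_\o\otimes V_\o,V_\o}$ carries a sign $(-1)^{0\cdot 1}=+1$, meaning $\psi$ still acts as $\psi_q$. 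Pointwise $J_3$-invariance therefore translates to $a_qv=-v$ and $\psi_qv=v$. The reverse direction proceeds exactly as in the symmetric case, using $b_q=\psi_qa_q\psi_q^{-1}$: from $a_qv=-v$ and $\psi_qv=v$ we get $b_qv=\psi_q(-v)=-v$, so $v\in \ext_q^3V$.

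The main obstacle, such as it is, lies in the bookkeeping of super-signs in the exterior case: while the generators $a$ and $b$ of $J_3$ pick up a minus sign when passing from $\ointq$ to $\cS\ointq$, the generator $\psi$ does not, precisely because $V_\o\otimes V_\o$ sits in even degree. Once this is checked, the argument is purely formal.
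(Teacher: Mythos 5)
Your proof is correct and follows essentially the same route as the paper's: forward inclusion from Proposition \ref{prop:cactus-group-fixes-symq} (the cactus group fixes $S_q^n V$ and $\ext_q^n V_\o$ pointwise), reverse inclusion from the fact that $a$ and $\psi$ generate $J_3$ via the conjugation relation $b=\psi a\psi^{-1}$. Your explicit verification of the super-sign bookkeeping for the exterior case (that $a$ and $b$ pick up a sign but $\psi$ does not, because $V_\o\otimes V_\o$ lands in even degree) is a useful elaboration of what the paper dismisses as ``analogous.'' One small omission: the proof of Proposition \ref{prop:cactus-group-fixes-symq} only establishes the pointwise-fixing property for $q$ transcendental, so for the $q=1$ case you should note separately (as the paper does) that the claim is trivial there, since $\rho_1$ factors through $S_3$ and $S^3V$, $\ext^3 V$ are fixed by the classical (anti)symmetric action.
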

\begin{proof}
  As part of the proof of Proposition \ref{prop:commutative-algebra}, we showed that $S_q^nV$ is fixed by the entire cactus group $J_n$ when $q>0$ is transcendental.
  This holds trivially when $q=1$. 
  Hence, $S_q^3V$ is certainly contained in the right hand side of \eqref{eq:sym3}. 
  The opposite inclusion follows from the fact that $a$ and $\psi$ generate $J_3$ (see Remark \ref{rem:generators-of-J3}), so anything fixed by $a_q$ and $\psi_q$ will also be fixed by $b_q$.  
  
  The argument for $\ext_q^3$ is analogous. 
\end{proof}

\begin{proof}[Proof of Theorem \ref{thm:symext}]
  As explained above, Lemma \ref{lem:squares_are_classical} has reduced the problem to proving that the identity
  \begin{equation}
    \label{eq:proof_of_thm:symext}
    S_q^3V-\ext_q^3V=(S_q^2V-\ext_q^2V)V
  \end{equation}
  holds in $K$ when $q>0$ is either transcendental or $1$. Let us rephrase this slightly. 

  For $\lambda\in\cP^+$, denote by $a_q^\lambda$, $b_q^\lambda$ and $\psi_q^\lambda$ the restrictions of $a_q$, $b_q$ and $\psi_q$ respectively to the space $(V\otimes V\otimes V)^\lambda$ of highest weight vectors in $V\otimes V\otimes V$ of weight $\lambda$. 

  Lemma \ref{lem:alternative_descr_symext} implies that the multiplicity of $V_\lambda$ in the left hand side of \eqref{eq:proof_of_thm:symext} is
  \begin{equation}
    \label{eq:left_lambda}
    \dim(\ker(a_q^\lambda-1)\cap\ker(\psi_q^\lambda-1))-\dim(\ker(a_q^\lambda+1)\cap\ker(\psi_q^\lambda-1)).
  \end{equation}  
  On the other hand, since $S_q^2V$ and $\ext_q^2V$ are precisely the $1$ and $(-1)$-eigenspaces of $a_q$ respectively, the multiplicity of $V_\lambda$ in the right hand side of \eqref{eq:proof_of_thm:symext} is  
  \begin{equation}
    \label{eq:right_lambda}
    \dim\ker(a_q^\lambda-1)-\dim\ker(a_q^\lambda+1).
  \end{equation}  
  What we have to prove, then, is that the difference in dimension between the $1$ and $(-1)$-eigenspaces of $a_q^\lambda$ does not change when we restrict to the $a_q^\lambda$-invariant subspace $\ker(\psi_q-1)$ of $(V\otimes V\otimes V)^\lambda$. Note that as a consequence of part (c) of Lemma \ref{lem:rels_for_J3}, $a_q^\lambda$ does indeed act on $\ker(\psi_q^\lambda-1)$.

  Let $t\in\bbC^\times$ be an arbitrary non-zero complex number. 
  Since $a_q^\lambda$ is an involution, the same part (c) of Lemma \ref{lem:rels_for_J3} implies that $a_q^\lambda$ implements an isomorphism of the $t$-eigenspace of $\psi_q^\lambda$ onto the $t^{-1}$-eigenspace, and vice versa. 
  It follows that the restrictions of $\frac{1\pm a_q^\lambda}2$ to $\bigoplus_{t\ne t^{-1}}\ker(\psi_q^\lambda-t)$ are projections of equal ranks, and hence the difference between \eqref{eq:right_lambda} and \eqref{eq:left_lambda} is 
  \begin{equation}
    \label{eq:right-left_lambda}
    \dim(\ker(a_q^\lambda-1)\cap\ker(\psi_q^\lambda+1))-\dim(\ker(a_q^\lambda+1)\cap\ker(\psi_q^\lambda+1)).
  \end{equation}
  In other words, the only discrepancy between the two arises from the action of $a_q^\lambda$ on the $(-1)$-eigenspace of $\psi_q^\lambda$. 

  For $q=1$, $\psi_q$ is a cyclic permutation of order $3$ and hence cannot have $-1$ as an eigenvalue. 
  On the other hand, for transcendental $q>0$, an argument similar to the one used in the proof of Proposition \ref{prop:commutative-algebra} shows that if $\psi_q$ has $-1$ as an eigenvalue, then so does $\psi_1$; we have just argued that this is not the case. 
  In conclusion, \eqref{eq:right-left_lambda} is zero for the simple reason that $\ker(\psi_q^\lambda+1)$ is trivial, and hence \eqref{eq:left_lambda} $=$ \eqref{eq:right_lambda}, as desired.  
\end{proof}


\begin{bibdiv}
  \begin{biblist}
    \bibselect{refs}
  \end{biblist}
\end{bibdiv}

\end{document}